\newtheorem{thm}{Theorem}[section]
\theoremstyle{definition}
\newtheorem{cor}[thm]{Corollary}
\newtheorem{prop}[thm]{Proposition}
\newtheorem{defn}[thm]{Definition}
\newtheorem{lem}[thm]{Lemma}
\newtheorem{ex}[thm]{Example}
\numberwithin{equation}{section}
\begin{document}
\title[2-irreducible and strongly 2-irreducible submodules of a module]
{2-irreducible and strongly 2-irreducible submodules of a module}
\author{F. Farshadifar*}
\address{\llap{*\,} (Corresponding Author) Department of Mathematics, Farhangian University, Tehran, Iran.}
\email{f.farshadifar@cfu.ac.ir}

\author{H. Ansari-Toroghy**}
\address{\llap{**\,}Department of pure Mathematics\\
Faculty of mathematical
Sciences\\
University of Guilan\\
P. O. Box 41335-19141, Rasht, Iran.}
\email{ansari@guilan.ac.ir}
\subjclass[2010]{13C13, 13C99}%
\keywords {irreducible ideal, strongly 2-irreducible ideal, 2-irreducible submodule, strongly 2-irreducible submodule}

\begin{abstract}
Let $R$ be a commutative ring with identity and $M$ be an $R$-module. In this paper, we will introduce the concept of 2-irreducible (resp., strongly 2-irreducible) submodules of $M$ as a generalization of irreducible (resp., strongly irreducible) submodules of $M$ and investigated some properties of these classes of modules.
\end{abstract}
\maketitle
\section{Introduction}
\noindent
Throughout this paper, $R$ will denote a commutative ring with
identity and $\Bbb Z$ will denote the ring of integers.

An ideal $I$ of $R$ is said to be \emph{irreducible} if $I = J_1\cap J_2$ for ideals $J_1$ and
$J_2$ of $R$ implies that either $I=J_1$ or $I=J_2$. A proper ideal $I$ of $R$ is said to be
\emph{strongly irreducible} if for ideals $J_1, J_2$ of $R$, $J_1\cap J_2 \subseteq I$ implies that $J_1 \subseteq I$ or
$J_2 \subseteq I$ \cite{HRR02}. An ideal $I$ of $R$ is said to be \emph{2-irreducible} if whenever $I = J_1 \cap J_2 \cap J_3$ for ideals $J_1, J_1$ and $J_3$ of $R$, then
either $I = J_1 \cap J_2$ or $I= J_1 \cap J_3$ or $I = J_2 \cap J_3$. Clearly, any irreducible ideal is a
2-irreducible ideal \cite{DM16}.

A proper submodule $N$ of an $R$-module $M$ is said to be \emph{irreducible} (resp., \emph{strongly irreducible}) if for submodules $H_1$ and $H_2$ of $M$, $N=H_1\cap H_2$ (resp., $H_1\cap H_2 \subseteq N$) implies that  $N=H_1$ or $N=H_2$.( resp., $H_1 \subseteq N$ or $H_2 \subseteq N$). 

The main purpose of this paper is to introduce the concept of 2-irreducible and strongly 2-irreducible submodules of an $R$-module $M$ as a generalization of irreducible and strongly irreducible submodules of $M$ and obtain some related results.

A submodule $N$ of an $R$-module $M$ is said to be a \textit{2-irreducible submodule} if whenever $N=H_1\cap H_2\cap H_3$ for submodules $H_1$, $H_2$ and $H_3$ of $M$, then either $N=H_1 \cap H_2$ or $N=H_2\cap H_3$ or $N=H_1\cap H_3$ (Definition \ref{11ld.1}).

A proper submodule $N$ of an $R$-module $M$ is said to be a \textit{strongly 2-irreducible submodule} if whenever $H_1\cap H_2\cap H_3 \subseteq N$ for submodules $H_1$, $H_2$ and $H_3$ of $M$, then either $H_1 \cap H_2 \subseteq N$ or $H_2\cap H_3 \subseteq N$ or $H_1\cap H_3 \subseteq N$  (Definition \ref{111ld.1}).

In Section 2 of this paper, for an $R$-module $M$, among other results, we prove that if $M$ is a Noetherian $R$-module and $N$ is a 2-irreducible submodule of $M$, then either $N$ is irreducible or $N$ is an intersection of exactly two irreducible submodules of $M$ (Theorem \ref{t1.5}). In Theorem \ref{t2.4}, we provide a characterization for strongly 2-irreducible submodules of $M$. Also, it is shown that if $M$ is a strong comultiplication $R$-module, then every non-zero proper submodule of $R$ is a  strongly sum 2-irreducible $R$-module if and only if every non-zero proper submodule of $M$ is a  strongly 2-irreducible submodule of $M$ (Theorem \ref{t42.1}).
Further, it is proved that if $N$ is a submodule of a finitely generated multiplication $R$-module $M$, then $N$ is a strongly 2-irreducible submodule of $M$ if and only if $(N:_RM)$ is a strongly 2-irreducible ideal of $R$ (Theorem \ref{p1.2}).
In Theorem \ref{t1.9} and \ref{tt1.9}, we provide some useful characterizations for strongly 2-irreducible submodules of some special classes of modules. Example \ref{e1.2} shows that the concepts of strongly irreducible submodules and strongly 2-irreducible submodules are different in general. Finally, let $R = R_1\times R_2\times \cdots \times R_n$ ($2\leq n < \infty$) be a decomposable ring and $M =M_1 \times M_2 \cdots \times M_n$ be an $R$-module, where for every $1\leq i \leq n$, $M_i$ is an $R_i$-module, respectively,  it is proved that a proper submodule $N$ of $M$ is a strongly 2-irreducible submodule of $M$ if and only if either $N =\times^n_{i=1}N_i$ such that for some $k \in \{1, 2, ..., n\}$, $N_k$ is a strongly 2-irreducible submodule of $M_k$, and $N_i =M_i$ for every $i\in \{1, 2, ..., n\}\setminus \{k\}$ or $N =\times^n_{i=1}N_i$ such that for some $k,m \in \{1, 2, ..., n\}$, $N_k$ is a strongly irreducible submodule of $M_k$, $N_m$ is a strongly irreducible submodule of $M_m$, and $N_i =M_i$ for every $i \in \{1, 2, ..., n\}\setminus \{k,m\}$ (Theorem \ref{t11.16}).

\section{Main results}
\begin{defn}\label{11ld.1}
We say that a submodule $N$ of an $R$-module $M$ is a \textit{2-irreducible submodule} if whenever $N=H_1\cap H_2\cap H_3$ for submodules $H_1$, $H_2$ and $H_3$ of $M$, then either $N=H_1 \cap H_2$ or $N=H_2\cap H_3$ or $N=H_1\cap H_3$.
\end{defn}

\begin{thm}\label{t1.5}
Let $M$ be a Noetherian $R$-module. If $N$ is a 2-irreducible submodule of $M$, then either $N$ is irreducible or $N$ is an intersection of exactly two irreducible submodules of $M$.
\end{thm}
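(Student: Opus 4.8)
The plan is to combine the classical fact that, over a Noetherian module, every submodule is a finite intersection of irreducible submodules with a short manipulation of the definition of a 2-irreducible submodule.

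First I would recall that in a Noetherian $R$-module $M$ every submodule is a finite intersection of irreducible submodules: if the family of submodules admitting no such decomposition were nonempty, it would have a maximal element $L$; since $L$ is not irreducible we may write $L = A \cap B$ with $A$ and $B$ both properly containing $L$, and by maximality each of $A$ and $B$ is a finite intersection of irreducible submodules, hence so is $L$ --- a contradiction. Applying this to $N$ and deleting any superfluous terms, write $N = Q_1 \cap Q_2 \cap \cdots \cap Q_n$ irredundantly, with each $Q_i$ irreducible; here irredundancy means that no $Q_i$ may be omitted, that is, $\bigcap_{j \neq i} Q_j \not\subseteq Q_i$ for every $i$. (We may assume $N$ is proper, so that $n \geq 1$.)

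If $n = 1$ then $N = Q_1$ is irreducible, and if $n = 2$ then $N = Q_1 \cap Q_2$ is an intersection of exactly two irreducible submodules; in either case we are done. So assume, for contradiction, that $n \geq 3$, and put $H_1 = Q_1$, $H_2 = Q_2$, and $H_3 = Q_3 \cap Q_4 \cap \cdots \cap Q_n$, so that $N = H_1 \cap H_2 \cap H_3$. Since $N$ is a 2-irreducible submodule, one of the equalities $N = H_1 \cap H_2$, $N = H_1 \cap H_3$, $N = H_2 \cap H_3$ must hold. In the first case $N = Q_1 \cap Q_2$, so $Q_1 \cap Q_2 \subseteq Q_3$ and the term $Q_3$ is superfluous; in the second case $N = Q_1 \cap Q_3 \cap \cdots \cap Q_n$, so $Q_2$ is superfluous; in the third case $N = Q_2 \cap Q_3 \cap \cdots \cap Q_n$, so $Q_1$ is superfluous. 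Each alternative contradicts the irredundancy of the decomposition, hence $n \leq 2$, which proves the theorem. The only point where the Noetherian hypothesis is used is the existence of the finite irreducible decomposition; the remainder of the argument is purely formal, and I do not anticipate any real obstacle beyond taking care with the reduction to an irredundant decomposition and with the small cases $n = 1, 2$.
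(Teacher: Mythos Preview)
Your proof is correct and follows essentially the same approach as the paper: write $N$ as an irredundant finite intersection of irreducible submodules (using the Noetherian hypothesis), then invoke 2-irreducibility to force the number of terms to be at most two. Your argument is in fact more carefully written than the paper's, which applies 2-irreducibility directly to $N = N_1 \cap \cdots \cap N_k$ without explicitly grouping the terms as you do with $H_1 = Q_1$, $H_2 = Q_2$, $H_3 = Q_3 \cap \cdots \cap Q_n$.
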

\begin{proof}
Let $N$ be a 2-irreducible submodule of $M$. By \cite[Exercise 9.31]{SH90}, $N$ can be written
as a finite irredundant irreducible decomposition $N=N_1\cap N_2 \cap ... \cap N_k$. We show that
either $k=1$ or $k=2$. If $k > 3$, then since $N$ is 2-irreducible, $N=N_i \cap N_j$ for some
$1 \leq i,j \leq k$, say $i=1$ and $j=2$. Therefore $N_1 \cap N_2\subseteq N_3 $, which is a contradiction.
\end{proof}

\begin{cor}\label{c1.6}
Let $M$ be a Noetherian multiplication $R$-module. If $N$ is a 2-irreducible submodule of $M$, then $N$ a 2-absorbing primary  submodule of $M$.
\end{cor}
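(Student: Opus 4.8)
The plan is to feed Theorem~\ref{t1.5} into the Lasker--Noether circle of ideas and then move everything to the ring $R$ through the submodule/ideal dictionary available for multiplication modules; note at the outset that a Noetherian module is finitely generated, so the hypothesis really says that $M$ is a finitely generated multiplication module which is Noetherian. First I would apply Theorem~\ref{t1.5}: since $M$ is Noetherian and $N$ is $2$-irreducible, $N$ is either irreducible or an intersection $N=N_1\cap N_2$ of two irreducible submodules of $M$. Then I would invoke the classical fact that an irreducible submodule of a Noetherian module is primary (if a self-contained argument is wanted: supposing $N$ is not primary, choose $a\in R$ and $m\in M$ with $am\in N$, $m\notin N$ and $a^kM\not\subseteq N$ for all $k$, stabilise the ascending chain $(N:_Ra)\subseteq(N:_Ra^2)\subseteq\cdots$ at some step $n$, and check that $N=(N+Rm)\cap(N+a^nM)$ with both factors strictly larger than $N$, contradicting irreducibility); this is recorded in \cite{SH90}. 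Hence $N$ is either a primary submodule of $M$ or an intersection of two primary submodules of $M$.

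It remains to pass to $R$ and back. Since $M$ is a finitely generated multiplication module, $N=(N:_RM)M$, the assignment $H\mapsto(H:_RM)$ carries primary submodules to primary ideals and commutes with finite intersections, and the $M$-radical of $N$ equals $\sqrt{(N:_RM)}\,M$. Consequently $(N:_RM)$ is either a primary ideal of $R$ or the intersection of two primary ideals of $R$, and in either case it is a $2$-absorbing primary ideal of $R$: a primary ideal is obviously $2$-absorbing primary, and the intersection of two primary ideals of a commutative ring is a $2$-absorbing primary ideal. Finally, for a finitely generated multiplication module $M$, a submodule $N$ is $2$-absorbing primary exactly when $(N:_RM)$ is a $2$-absorbing primary ideal of $R$ (the proof being the routine translation through $N=(N:_RM)M$ and the formula for the $M$-radical), so $N$ is a $2$-absorbing primary submodule of $M$.

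The steps carrying the real weight are the two imported facts---an irreducible submodule of a Noetherian module is primary, and the intersection of two primary ideals is $2$-absorbing primary---after which the corollary is bookkeeping. If one insisted instead on a direct submodule-level argument that $N_1\cap N_2$ with each $N_i$ a primary submodule of the multiplication module $M$ is $2$-absorbing primary, the stubborn case is $abm\in N_1\cap N_2$ with $ab\notin(N:_RM)$, $m\notin N_1$, $m\notin N_2$ and the primes $P_i=\sqrt{(N_i:_RM)}$ receiving one factor of $ab$ each; resolving it forces one to use $N_i\subseteq P_iM$ together with the identity $(P_1M)\cap(P_2M)=(P_1\cap P_2)M$ that holds in a multiplication module, and this is exactly where the multiplication hypothesis is indispensable.
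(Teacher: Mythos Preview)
Your argument is correct and follows the same skeleton as the paper: apply Theorem~\ref{t1.5}, use that irreducible submodules of a Noetherian module are primary, and conclude that an intersection of at most two primary submodules is $2$-absorbing primary. The only difference is in the last step: the paper stays at the module level and quotes \cite[Theorem~2.20]{mtoa16} directly (the intersection of two primary submodules of a multiplication module is $2$-absorbing primary), whereas you descend to $R$ via $N\mapsto(N:_RM)$, invoke the ideal-theoretic version of the same fact, and then climb back using the equivalence ``$N$ is $2$-absorbing primary $\Leftrightarrow$ $(N:_RM)$ is $2$-absorbing primary'' for finitely generated multiplication modules. Your detour is a little longer but uses the same circle of results, and it has the virtue of making explicit exactly where the multiplication hypothesis enters.
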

\begin{proof}
Let $N$ be a 2-irreducible submodule of $M$. By the fact that every irreducible submodule of a Noetherian $R$-module is primary and regarding Theorem \ref{t1.5}, we have either $N$ is a primary submodule or is a sum of two primary submodules. It is clear that every primary submodule is 2-absorbing  primary, also the sum of two primary submodules is a 2-absorbing
primary submodule, by \cite[Theorem 2.20]{mtoa16}.
 \end{proof}

\begin{defn}\label{111ld.1}
We say that a proper submodule $N$ of an $R$-module $M$ is a \textit{strongly 2-irreducible submodule} if whenever $H_1\cap H_2\cap H_3 \subseteq N$ for submodules $H_1$, $H_2$ and $H_3$ of $M$, then either $H_1 \cap H_2 \subseteq N$ or $H_2\cap H_3 \subseteq N$ or $H_1\cap H_3 \subseteq N$.
\end{defn}

\begin{ex}\label{e11ld.1}\cite[Corollary 2]{DM16}
Consider the $\Bbb Z$-module $\Bbb Z$. Then $n \Bbb Z$ is a strongly 2-irreducible submodule of  $\Bbb Z$ if $n=0$, $p^t$ or $p^rq^s$, where $p, q$ are prime integers and $t, r, s$ are natural numbers.
\end{ex}

\begin{prop}\label{p22.4}
The strongly 2-irreducible submodules of a distributive $R$-module are precisely the 2-irreducible submodules.
\end{prop}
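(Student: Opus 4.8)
The plan is to prove the two inclusions separately; the implication ``strongly 2-irreducible $\Rightarrow$ 2-irreducible'' is purely formal, while the converse is where distributivity enters.

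First I would handle the easy direction. Suppose $N$ is strongly 2-irreducible and $N = H_1 \cap H_2 \cap H_3$ for submodules $H_i$ of $M$. Then in particular $H_1 \cap H_2 \cap H_3 \subseteq N$, so by hypothesis one of the three pairwise intersections, say $H_1 \cap H_2$, is contained in $N$. Combined with the reverse inclusion $N = H_1 \cap H_2 \cap H_3 \subseteq H_1 \cap H_2$, this gives $N = H_1 \cap H_2$, so $N$ is 2-irreducible. This direction uses no distributivity; the only caveat, which I would note, is that to get a clean equivalence one restricts to proper submodules, since $M$ itself is trivially 2-irreducible but never strongly 2-irreducible.

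For the converse, let $N$ be a proper 2-irreducible submodule and suppose $H_1 \cap H_2 \cap H_3 \subseteq N$. The idea is to ``saturate'' the $H_i$ by adding $N$: put $K_i = N + H_i$ for $i = 1,2,3$. Using the distributive law $A \cap (B + C) = (A \cap B) + (A \cap C)$ one checks that $(N + H_1) \cap (N + H_2) = N + (H_1 \cap H_2)$ (since $N \subseteq N + H_2$), and intersecting once more with $N + H_3$ and reapplying the identity gives $K_1 \cap K_2 \cap K_3 = N + (H_1 \cap H_2 \cap H_3) = N$, the last step because $H_1 \cap H_2 \cap H_3 \subseteq N$. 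Now 2-irreducibility of $N$ applies to the decomposition $N = K_1 \cap K_2 \cap K_3$, so $N$ equals one of the pairwise intersections, say $N = K_1 \cap K_2 = (N + H_1) \cap (N + H_2) = N + (H_1 \cap H_2)$. Hence $H_1 \cap H_2 \subseteq N$, and by symmetry the other cases give $H_2 \cap H_3 \subseteq N$ or $H_1 \cap H_3 \subseteq N$; thus $N$ is strongly 2-irreducible.

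The only step requiring genuine care is the identity $K_1 \cap K_2 \cap K_3 = N + (H_1 \cap H_2 \cap H_3)$ in a distributive module, and the parallel simplification $(N + H_i) \cap (N + H_j) = N + (H_i \cap H_j)$ used to read off the conclusion; both follow from iterating the distributive law (note the submodule lattice is always modular, so the form of the distributive law used is immaterial). Everything else is routine lattice bookkeeping, so I do not expect a real obstacle here.
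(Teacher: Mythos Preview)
Your argument is correct and is exactly the route the paper has in mind: the paper's own proof is simply ``This is straightforward,'' but the key distributive identity you isolate, $(N+H_1)\cap(N+H_2)\cap(N+H_3)=N+(H_1\cap H_2\cap H_3)$, is precisely the one the authors invoke explicitly later in the proof of Theorem~\ref{t222.1}. Your remark about restricting to proper submodules for a clean equivalence is also apt.
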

\begin{proof}
This is straightforward.
\end{proof}

\begin{thm}\label{t2.4}
Let $N$ be a proper submodule of an $R$-module $M$. Then the following conditions are equivalent:
\begin{itemize}
\item [(a)] $N$ is a strongly 2-irreducible submodule;
\item [(b)] For all elements $x, y, z$ of $M$, we have
$(Rx + Ry) \cap (Rx + Rz) \cap (Ry+Rz)\subseteq N$ implies that either $(Rx + Ry) \cap (Rx + Rz)\subseteq N$ or  $(Rx + Ry) \cap (Ry+Rz)\subseteq N$ or $(Rx + Rz) \cap (Ry+Rz)\subseteq N$.
\end{itemize}
\end{thm}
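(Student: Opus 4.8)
The plan is to prove the two implications separately. The implication $(a)\Rightarrow(b)$ is immediate: given $x,y,z\in M$ with $(Rx+Ry)\cap(Rx+Rz)\cap(Ry+Rz)\subseteq N$, one applies Definition \ref{111ld.1} directly to the three submodules $H_1=Rx+Ry$, $H_2=Rx+Rz$, $H_3=Ry+Rz$ to get the desired conclusion. So the content of the theorem is the reverse implication $(b)\Rightarrow(a)$, which I would establish by contradiction.

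Assume (b) holds but $N$ is not strongly 2-irreducible. Then there are submodules $H_1,H_2,H_3$ of $M$ with $H_1\cap H_2\cap H_3\subseteq N$, yet none of the three pairwise intersections is contained in $N$. Pick witnesses $a\in(H_1\cap H_2)\setminus N$, $b\in(H_2\cap H_3)\setminus N$, and $c\in(H_1\cap H_3)\setminus N$. The key observation is the pattern of containments forced by membership of each witness in two of the $H_i$'s: since $a,b\in H_2$ we have $Ra+Rb\subseteq H_2$; since $a,c\in H_1$ we have $Ra+Rc\subseteq H_1$; and since $b,c\in H_3$ we have $Rb+Rc\subseteq H_3$. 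Consequently $(Ra+Rb)\cap(Ra+Rc)\cap(Rb+Rc)\subseteq H_1\cap H_2\cap H_3\subseteq N$.

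Now I would invoke hypothesis (b) with $x=a$, $y=b$, $z=c$: at least one of $(Ra+Rb)\cap(Ra+Rc)$, $(Ra+Rb)\cap(Rb+Rc)$, $(Ra+Rc)\cap(Rb+Rc)$ must be contained in $N$. But (because $R$ has an identity) $a\in Ra+Rb$ and $a\in Ra+Rc$, so $a$ lies in the first intersection; likewise $b$ lies in the second and $c$ in the third. Each of the three cases therefore places one of $a,b,c$ inside $N$, contradicting the choice of the witnesses. Hence $N$ is strongly 2-irreducible, completing $(b)\Rightarrow(a)$.

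I do not expect a genuine obstacle here; the only point requiring care is the bookkeeping — choosing the assignment of witnesses to the variables $x,y,z$ so that simultaneously each cyclic submodule $Rx+Ry$, $Rx+Rz$, $Ry+Rz$ is absorbed into one of the $H_i$, and each witness sits in exactly the two cyclic submodules needed to derive the contradiction. It is also worth remarking, as a byproduct of this argument, that strong 2-irreducibility of $N$ need only be tested against submodules generated by at most two elements.
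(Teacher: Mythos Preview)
Your proof is correct and follows essentially the same argument as the paper: both directions are handled identically, with $(b)\Rightarrow(a)$ proved by choosing witnesses in the three pairwise intersections outside $N$, observing that the two-generated submodules they span land in the $H_i$, and then applying (b) to force one witness into $N$. Your write-up is in fact slightly more explicit about the bookkeeping than the paper's version.
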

\begin{proof}
$(a) \Rightarrow (b)$
This ia clear.

$(b) \Rightarrow (a)$
Let  $H_1\cap H_2\cap H_3 \subseteq N$ for submodules $H_1$, $H_2$ and $H_3$ of $M$. If  $H_1\cap H_2\not \subseteq N$,  $H_1\cap H_3 \not \subseteq N$, and  $H_2\cap H_3 \not \subseteq N$, then there exist  elements $x, y, z$ of $M$ such that $x \in H_2\cap H_3 $,  $y \in H_1\cap H_3$, and  $z \in H_1 \cap H_2$ but  $x \not \in N$, $y \not \in N$, and $z\not \in N$. Therefore,
$$
(Ry + Rz) \cap (Rx + Rz)\cap (Rx + Ry)\subseteq H_1\cap H_2 \cap H_3\subseteq  N.
$$
Hence by the part (a),  either $(Ry + Rz) \cap (Rx + Rz) \subseteq N$ or   $(Ry + Rz) \cap (Rx + Ry)\subseteq N$ or  $(Rx + Rz)\cap (Rx + Ry)\subseteq N$. Thus either $z \in N$ or $y \in N$ or $x \in N$. This contradiction completes the proof.
\end{proof}

Recall that a \textit{waist submodule} of an $R$-module $M$ is a submodule that is comparable to any other submodules of $M$.
\begin{prop}\label{p2.4}
Let $N$ be a proper submodule of an $R$-module $M$. Then we have the following.
\begin{itemize}
\item [(a)] If $N$ is a strongly 2-irreducible submodule of $M$, then it is also a 2-irreducible submodule of $M$.
\item [(b)] If $N$ is a strongly 2-irreducible submodule of $M$, then $N$ is a strongly 2-irreducible submodule of $T$ and $N/K$ is a strongly 2-irreducible submodule of $M/K$ for any $K \subseteq N \subseteq T$.
\item [(c)] If for all elements  $x, y, z$ of $M$ we have
$Rx \cap Ry \cap Rz \subseteq N$ implies that either $Rx \cap Ry \subseteq N$  or $Rx \cap Rz \subseteq N$  or $Ry \cap Rz \subseteq N$, then $N$ is a strongly 2-irreducible submodule of $M$.
\item [(d)] If $N$ is a waist submodule of $M$, then $N$ is strongly 2-irreducible submodule of $M$ if and only if $N$ is 2-irreducible module.
\item [(e)] If $N$ satisfies $(N + T) \cap (N +K) = N+ (T \cap K)$, whenever $T \cap K \subseteq N$, then $N$ is
strongly 2-irreducible submodule of $M$ if and only if $N$ is a 2-irreducible module.
\end{itemize}
\end{prop}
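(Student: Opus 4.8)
For (a), observe that $N=H_1\cap H_2\cap H_3$ gives in particular $H_1\cap H_2\cap H_3\subseteq N$, so strong $2$-irreducibility yields (say) $H_1\cap H_2\subseteq N$; since also $N\subseteq H_1\cap H_2$, we get $N=H_1\cap H_2$, which is exactly what $2$-irreducibility (Definition \ref{11ld.1}) demands. For (b), recall that the submodules of $T$ are precisely the submodules of $M$ contained in $T$, and the submodules of $M/K$ are precisely $H/K$ with $K\subseteq H\subseteq M$; hence an inclusion $H_1\cap H_2\cap H_3\subseteq N$ with all $H_i\subseteq T$ is an instance of the hypothesis in $M$, while an inclusion $(H_1/K)\cap(H_2/K)\cap(H_3/K)\subseteq N/K$ is (as $K\subseteq H_i$) the quotient of $H_1\cap H_2\cap H_3\subseteq N$. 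Applying strong $2$-irreducibility of $N$ in $M$ and pushing the conclusion back down to $T$, resp.\ $M/K$, finishes; properness is immediate since $N\neq M$ gives $N/K\neq M/K$, and for $T$ one takes $N\subsetneq T$.

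For (c), the plan is to imitate the argument for $(b)\Rightarrow(a)$ in Theorem \ref{t2.4}. Assume $H_1\cap H_2\cap H_3\subseteq N$ while no pairwise intersection lies in $N$; we may assume each $H_i\not\subseteq N$, and pick $x\in(H_2\cap H_3)\setminus N$, $y\in(H_1\cap H_3)\setminus N$, $z\in(H_1\cap H_2)\setminus N$. Then $Rx\cap Ry\cap Rz\subseteq H_1\cap H_2\cap H_3\subseteq N$, so the hypothesis of (c) applies and puts one of $Rx\cap Ry$, $Rx\cap Rz$, $Ry\cap Rz$ inside $N$. The point requiring care — and the reason this is not a verbatim copy of Theorem \ref{t2.4}, where the shared element sits in two of the modules $Rx+Ry,\ Rx+Rz,\ Ry+Rz$ — is to arrange the witnesses so that one of $x,y,z$ is forced into $N$; this needs either a cleverer choice of $x,y,z$ or an argument carried out in $M/N$.

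For (d), part (a) already gives ``strongly $2$-irreducible $\Rightarrow$ $2$-irreducible'', so only the converse uses the waist hypothesis: if $N$ is a $2$-irreducible waist and $H_1\cap H_2\cap H_3\subseteq N$, then each $H_i$ is comparable to $N$, so either some $H_i\subseteq N$ (whence $H_i\cap H_j\subseteq N$) or $N\subseteq H_i$ for all $i$, in which case $N=H_1\cap H_2\cap H_3$ and $2$-irreducibility gives $N=H_i\cap H_j$ for some pair. For (e), again only the converse is at issue: using the modular identity one gets from $H_1\cap H_2\cap H_3\subseteq N$ that $(N+(H_1\cap H_2))\cap(N+H_3)=N+(H_1\cap H_2\cap H_3)=N$, and likewise $(N+(H_i\cap H_j))\cap(N+(H_i\cap H_k))=N$, for all index choices; the aim is to glue these into a three-term decomposition $N=C_1\cap C_2\cap C_3$ on which $2$-irreducibility bites, each equality $C_i\cap C_j=N$ being readable as $H_i\cap H_j\subseteq N$.

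I expect the main obstacle to be in (e). The three-term decompositions that fall out of the modular hypothesis most directly — such as $N=(N+(H_1\cap H_2))\cap(N+(H_1\cap H_3))\cap(N+(H_2\cap H_3))$ — have \emph{all} pairwise intersections already equal to $N$, so $2$-irreducibility extracts nothing. The fix is to look for a decomposition in which the redundant pairwise intersection is strictly larger than $N$ unless one of the desired containments already holds — for instance by mixing a single term $N+(H_i\cap H_j)$ with two terms of the form $N+H_k$ — and then to check, via the modular identity, that the triple intersection nevertheless collapses to $N$. The analogous delicate step in (c) is forcing the cyclic hypothesis to pull one of $x,y,z$ into $N$.
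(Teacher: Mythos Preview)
Your treatment of (a), (b), and (d) is correct and matches the paper, which proves (a) exactly as you do and declares (b), (d), (e) ``straightforward''. For (c), your setup is identical to the paper's: pick $x\in (H_2\cap H_3)\setminus N$, $y\in (H_1\cap H_3)\setminus N$, $z\in (H_1\cap H_2)\setminus N$. The paper then simply writes ``Now the result follows by assumption'', whereas you correctly flag that, unlike in Theorem~\ref{t2.4}, the cyclic hypothesis on $Rx,Ry,Rz$ does not force any of $x,y,z$ into $N$.

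Your caution here is fully justified, because part (c) is in fact false as stated. Take $M=\Bbb Z^3$ and $N=0$. For nonzero $x,y\in\Bbb Z^3$ one has $\Bbb Zx\cap\Bbb Zy\neq 0$ if and only if $x$ and $y$ are $\Bbb Q$-proportional; hence if all three pairwise intersections $\Bbb Zx\cap\Bbb Zy$, $\Bbb Zx\cap\Bbb Zz$, $\Bbb Zy\cap\Bbb Zz$ are nonzero, then $x,y,z$ are mutually proportional and $\Bbb Zx\cap\Bbb Zy\cap\Bbb Zz\neq 0$. Thus the hypothesis of (c) holds for $N=0$. But with $H_1=\Bbb Z e_1+\Bbb Z e_2$, $H_2=\Bbb Z e_1+\Bbb Z e_3$, $H_3=\Bbb Z e_2+\Bbb Z e_3$ one has $H_1\cap H_2\cap H_3=0$ while each $H_i\cap H_j=\Bbb Z e_k\neq 0$, so $N=0$ is not strongly $2$-irreducible. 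The step you were worried about cannot be repaired; the paper's ``follows by assumption'' hides a genuine error, not merely an omission.

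For (e), you have not completed the argument, and the obstacle you describe (that the natural three-term decompositions all have pairwise intersections equal to $N$, so $2$-irreducibility yields nothing) is real. The paper offers no details beyond ``straightforward'', so there is nothing to compare against; your proposed fix of mixing terms $N+H_k$ with terms $N+(H_i\cap H_j)$ remains to be carried out, and you should check carefully whether the restricted modular hypothesis (only assumed when $T\cap K\subseteq N$) is strong enough to collapse such a mixed triple intersection to $N$.
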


\begin{proof}
(a) Let $N$ be a strongly 2-irreducible submodule of $M$ and let $N=H_1\cap H_2 \cap H_3$ for submodules $H_1$, $H_2$ and $H_3$ of $M$. Then by assumption, either $H_1\cap H_2\subseteq N$ or  $H_1\cap H_3\subseteq N$ or  $H_2 \cap H_3\subseteq N$. Now the result follows from the fact that the reverse of inclusions are clear.

The parts (b), (d), and (e) are straightforward.

(c) Let  $ H_1 \cap H_2 \cap H_3 \subseteq N$ for submodules $H_1$, $H_2$ and $H_3$ of $M$. If  $ H_1 \cap H_2 \not \subseteq N$,  $ H_1 \cap H_3 \not \subseteq N$, and  $H_2 \cap H_3 \not \subseteq N$, then there exist elements $x, y, z$ of $M$ such that $x \in H_2\cap H_3$,  $y \in H_1\cap H_3$, and  $z \in H_1\cap H_2$ but  $x \not \in N$, $y \not \in N$, and $z \not \in N$. Now the result follows by assumption.
\end{proof}

An $R$-module $M$ is said to be a \emph{comultiplication module} if for every submodule $N$ of $M$ there exists an ideal $I$ of $R$ such that $N=(0:_MI)$, equivalently, for each submodule $N$ of $M$, we have $N=(0:_MAnn_R(N))$ \cite{AF07}.

An $R$-module $M$ satisfies the \emph{double annihilator
conditions} (DAC for short)  if for each ideal $I$ of $R$
we have $I=Ann_R(0:_MI)$ \cite{Fa95}.

An $R$-module $M$ is said to be a \emph{strong comultiplication module} if $M$ is
a comultiplication $R$-module and satisfies the DAC conditions \cite{AF09}.

A submodule $N$ of an $R$-module $M$ is said to be a \textit{strongly sum 2-irreducible submodule} if whenever $N\subseteq H_1+H_2+H_3$ for submodules $H_1$, $H_2$ and $H_3$ of $M$, then either $N\subseteq H_1+H_2$ or $N\subseteq H_2+H_3$ or $N\subseteq H_1+H_3$. Also, $M$ is said to be a \textit{strongly sum 2-irreducible module} if and only if $M$ is a strongly sum 2-irreducible submodule of itself \cite{FA05}.
\begin{thm}\label{t42.1}
Let $M$ be a strong comultiplication $R$-module. Then every non-zero proper submodule of $R$ is a strongly sum 2-irreducible $R$-module if and only if every non-zero proper submodule of $M$ is a  strongly 2-irreducible submodule of $M$.
\end{thm}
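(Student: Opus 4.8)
The plan is to exploit the order-reversing duality that a strong comultiplication module carries between its submodules and the ideals of $R$, and to observe that, read through this duality, ``strongly $2$-irreducible submodule of $M$'' is literally the same condition as ``strongly sum $2$-irreducible submodule of $R$''.

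First I would set up the dictionary. For a strong comultiplication $R$-module $M$ the assignments $N\mapsto Ann_R(N)$ and $I\mapsto (0:_M I)$ are mutually inverse, inclusion-reversing bijections between the submodules of $M$ and the ideals of $R$: comultiplication gives $N=(0:_M Ann_R(N))$ for every submodule $N$, and the DAC gives $I=Ann_R(0:_M I)$ for every ideal $I$. From these two identities one extracts the two ``distributivity'' rules that make the argument go:
\[
Ann_R(H_1\cap H_2\cap H_3)=Ann_R(H_1)+Ann_R(H_2)+Ann_R(H_3)
\]
for submodules $H_1,H_2,H_3$ of $M$, and
\[
(0:_M\, J_1+J_2+J_3)=(0:_M J_1)\cap(0:_M J_2)\cap(0:_M J_3)
\]
for ideals $J_1,J_2,J_3$ of $R$ (the two-term versions, where one inclusion is automatic and the reverse follows by applying $(0:_M -)$, respectively $Ann_R(-)$, to the comultiplication/DAC identities, are proved first, and the three-term versions follow by iterating). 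Finally one notes that this bijection carries non-zero proper submodules of $M$ onto non-zero proper ideals of $R$: indeed $Ann_R(N)=R\iff N=0$ and $Ann_R(N)=0\Rightarrow N=(0:_M 0)=M$, and one checks the analogous statements in the other direction.

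With the dictionary in hand, each implication is a pure transport argument. For the ``if'' direction, assume every non-zero proper submodule of $M$ is strongly $2$-irreducible, and let $I$ be a non-zero proper ideal of $R$ with $I\subseteq J_1+J_2+J_3$. Put $N=(0:_M I)$ and $H_i=(0:_M J_i)$; applying $(0:_M -)$ and the second distributivity rule gives $H_1\cap H_2\cap H_3\subseteq N$, and $N$ is a non-zero proper submodule, hence strongly $2$-irreducible, so after relabelling $H_1\cap H_2\subseteq N$. Applying $Ann_R(-)$ and the first distributivity rule now yields $J_1+J_2=Ann_R(H_1)+Ann_R(H_2)=Ann_R(H_1\cap H_2)\supseteq Ann_R(N)=I$, as required. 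The ``only if'' direction is the mirror image: given a non-zero proper submodule $N$ of $M$ with $H_1\cap H_2\cap H_3\subseteq N$, set $I=Ann_R(N)$ and $J_i=Ann_R(H_i)$, deduce $I\subseteq J_1+J_2+J_3$, invoke strong sum $2$-irreducibility of the non-zero proper ideal $I$, and pull the resulting inclusion back through $(0:_M -)$ to obtain one of $H_i\cap H_j\subseteq N$.

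The only non-formal ingredient — hence the expected main obstacle — is establishing the dictionary: the two distributivity rules and the fact that the correspondence restricts to the non-zero proper objects on both sides. These are exactly the points where the full strength of ``strong comultiplication'' (both comultiplication and the DAC) is genuinely used, and they can be cited from the standard theory of (strong) comultiplication modules. Once they are available the equivalence is essentially immediate, because intersections of submodules of $M$ correspond to sums of ideals of $R$ with inclusions reversed, so the defining condition of a strongly $2$-irreducible submodule of $M$ becomes verbatim the defining condition of a strongly sum $2$-irreducible submodule of $R$.
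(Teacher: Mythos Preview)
Your proposal is correct and follows essentially the same route as the paper: both directions transport the defining inclusion through the order-reversing correspondence $N\leftrightarrow Ann_R(N)$, $I\leftrightarrow(0:_MI)$, using comultiplication to pull back in one direction and the DAC in the other. The paper cites \cite[2.5]{HH19} for the key step $Ann_R(N)\subseteq Ann_R(H_1)+Ann_R(H_2)+Ann_R(H_3)$ (your first distributivity rule), and is somewhat less explicit than you are about why $Ann_R(N)$ and $(0:_MI)$ remain non-zero and proper, but otherwise the arguments coincide.
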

\begin{proof}
$"\Rightarrow"$
Let $N$ be a non-zero proper submodule of $M$ and let $H_1\cap H_2\cap H_3\subseteq N$ for submodules $H_1$, $H_2$ and $H_3$ of $M$. Then by using \cite[2.5]{HH19},
$$
Ann_R(N) \subseteq Ann_R(H_1)+Ann_R(H_2)+Ann_R(H_3).
$$
 This implies that either $Ann_R(N) \subseteq Ann_R(H_1)+Ann_R(H_2)$ or $Ann_R(N) \subseteq Ann_R(H_1)+Ann_R(H_3)$ or $Ann_R(N) \subseteq Ann_R(H_2)+Ann_R(H_3)$ since by assumption, $Ann_R(N)$ is a strongly sum 2-irreducible $R$-module. Therefore, either $H_1\cap H_2\subseteq N$ or $H_1\cap H_3\subseteq N$ or $H_2\cap H_3\subseteq N$ since $M$ is a comultiplication $R$-module.

$"\Leftarrow"$  Let $I$ be a non-zero proper submodule of $R$ and let $I \subseteq I_1+I_2+I_3$. Then
$$
(0:_MI_1)\cap (0:_MI_2)\cap (0:_MI_3) \subseteq (0:_MI).
$$
Thus by assumption,  either $(0:_MI_1)\cap (0:_MI_2) \subseteq (0:_MI)$ or $(0:_MI_1)\cap (0:_MI_3) \subseteq (0:_MI)$ or $(0:_MI_2)\cap (0:_MI_3) \subseteq (0:_MI)$. This implies that either $(0:_MI_1+I_2) \subseteq (0:_MI)$ or $(0:_MI_1+I_3) \subseteq (0:_MI)$ or $(0:_MI_2+I_3) \subseteq (0:_MI)$. Thus either $I \subseteq I_1+I_2$ or $I \subseteq I_1+I_3$ or $I \subseteq I_2+I_3$ since $M$ is a strong comultiplication $R$-module.
\end{proof}

An $R$-module $M$ is said to be a \emph{multiplication module} if for every submodule $N$ of $M$ there exists an ideal $I$ of $R$ such that $N=IM$ \cite{Ba81}.

\begin{thm}\label{p1.2}
Let $N$ be a submodule of a finitely generated multiplication $R$-module $M$. Then $N$ is a strongly 2-irreducible submodule of $M$ if and only if $(N:_RM)$ is a strongly 2-irreducible ideal of $R$.
\end{thm}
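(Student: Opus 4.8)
The plan uses two standard facts about a finitely generated multiplication module $M$: (i) $L=(L:_RM)M$ for every submodule $L$ of $M$; and (ii) $(X\cap Y:_RM)=(X:_RM)\cap(Y:_RM)$ for all submodules $X,Y$. By (i) and (ii) the order-preserving maps $L\mapsto(L:_RM)$ and $I\mapsto IM$ transport the defining condition back and forth, so the two halves read as mirror images of one another, subject to one caveat flagged below.

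Assume first that $(N:_RM)$ is strongly $2$-irreducible; then $(N:_RM)\neq R$, hence $N\neq M$ since $M=RM$, so $N$ is proper. If $H_1\cap H_2\cap H_3\subseteq N$ for submodules $H_i$ of $M$, then applying $(\cdot:_RM)$ and using (ii) gives $(H_1:_RM)\cap(H_2:_RM)\cap(H_3:_RM)\subseteq(N:_RM)$, so after relabelling $(H_1\cap H_2:_RM)=(H_1:_RM)\cap(H_2:_RM)\subseteq(N:_RM)$; multiplying by $M$ and using (i), $H_1\cap H_2=(H_1\cap H_2:_RM)M\subseteq(N:_RM)M=N$. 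Thus $N$ is strongly $2$-irreducible; this direction is routine.

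Conversely, assume $N$ is strongly $2$-irreducible, so $(N:_RM)\neq R$, and let $I_1\cap I_2\cap I_3\subseteq(N:_RM)$, i.e. $(I_1\cap I_2\cap I_3)M\subseteq N$. The aim is to produce a triple intersection of submodules of $M$ lying inside $N$, so that the strong $2$-irreducibility of $N$ applies. When $M$ is faithful this is immediate: a finitely generated faithful multiplication module is locally cyclic, with $M_p$ free of rank one for each prime $p$, and hence, checking at each prime and using that localization commutes with finite intersections, $(I_1\cap I_2\cap I_3)M=I_1M\cap I_2M\cap I_3M$. Then $I_1M\cap I_2M\cap I_3M\subseteq N$, so strong $2$-irreducibility of $N$ yields, say, $(I_1\cap I_2)M=I_1M\cap I_2M\subseteq N$, whence $I_1\cap I_2\subseteq(N:_RM)$. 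In general one would attempt to reduce to this case by passing to $R/Ann_R(M)$, over which $M$ becomes faithful while retaining the same submodule lattice and the same colon ideal $(N:_RM)$.

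The step I expect to be the real obstacle is precisely the identity $(I_1\cap I_2\cap I_3)M=I_1M\cap I_2M\cap I_3M$. For an arbitrary finitely generated multiplication module only the inclusion $\subseteq$ is automatic, and the reverse can fail once $M$ is not faithful: for instance with $M=k[x]$ over $R=k[x,y]$ one has $(x)M=(x+y)M=xk[x]$ but $\bigl((x)\cap(x+y)\bigr)M=x^2k[x]$. Consequently replacing $I_j$ by $I_j+Ann_R(M)$ does not preserve the triple intersection, so the reduction to the faithful case is not formal; making the converse work in full generality seems to require $M$ faithful (or a hypothesis forcing the intersection identity), and granted that, everything else is bookkeeping with (i) and (ii).
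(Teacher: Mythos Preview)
Your argument follows the same template as the paper: transport inclusions back and forth via $L\mapsto(L:_RM)$ and $I\mapsto IM$, using $L=(L:_RM)M$ and the fact that colon ideals respect finite intersections. Your backward direction matches the paper's (the paper additionally invokes \cite[Corollary of Theorem~9]{SM88} for the cancellation $IM\subseteq JM\Rightarrow I\subseteq J+\mathrm{Ann}_R(M)$, but, as your write-up shows, once one works with ideals of the form $(L:_RM)$ this is not strictly needed).

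For the forward direction the paper does exactly what you outline: from $J_1\cap J_2\cap J_3\subseteq(N:_RM)$ it asserts $J_1M\cap J_2M\cap J_3M\subseteq N$, citing El-Bast--Smith \cite[Corollary~1.7]{BS89} for the identity $\bigcap_i(J_iM)=(\bigcap_i J_i)M$, and then finishes as you do. So the ``obstacle'' you flag is precisely the step the paper handles by external citation rather than by argument. Your counterexample with $M=k[x]$ over $R=k[x,y]$, $I=(x)$, $J=(x+y)$ is correct and shows that this intersection identity fails for arbitrary ideals over a non-faithful cyclic (hence multiplication) module; the identity does hold for ideals containing $\mathrm{Ann}_R(M)$ (since for finitely generated multiplication $M$ one has $(IM:_RM)=I+\mathrm{Ann}_R(M)$), which is all the backward direction uses, but in the forward direction the $J_i$ are unrestricted. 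Whether \cite[Corollary~1.7]{BS89} as stated really covers arbitrary ideals, or whether the paper's invocation is optimistic, is worth checking against the original; your instinct that faithfulness (or an equivalent reduction) is doing genuine work here is sound, and the gap you identify is shared by the paper's own proof rather than peculiar to your attempt.
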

\begin{proof}
$"\Rightarrow"$ Let $N$ be a strongly  2-irreducible submodule of $M$ and let $ J_1\cap J_2 \cap J_3 \subseteq (N:_RM)$ for some ideals $J_1, J_2$, and $J_3$ of $R$. Then
$$
J_1M\cap J_2M \cap J_3M \subseteq (N:_RM)M=N
$$
 by \cite[Corollary 1.7]{BS89}. Thus by assumption, either  $J_1M\cap J_2M \subseteq N$ or  $J_1M\cap J_3M \subseteq N$ or $J_2M \cap J_3M \subseteq N$. Hence, either
$ (J_1\cap J_2)M \subseteq (N:_RM)M$ or $ (J_1\cap J_3)M \subseteq (N:_RM)M$ or $(J_2 \cap J_3)M \subseteq (N:_RM)M$.
Therefore, either $ J_1\cap J_2\subseteq (N:_RM)$ or  $ J_1\cap J_3 \subseteq (N:_RM)$ or  $J_2 \cap J_3 \subseteq (N:_RM)$ by \cite[Corollary of Theorem 9]{SM88}.

$"\Leftarrow"$
Let $(N:_RM)$ is a strongly 2-irreducible ideal of $R$ and let
$H_1\cap H_2 \cap H_3\subseteq N$ for some submodules $H_1$, $H_2$ and $H_3$ of $M$. Then we have
$$
(H_1\cap H_2 \cap H_3:_RM)M=((H_1:_RM)\cap (H_2 :_RM)\cap (H_3:_RM))M\subseteq (N:_RM)M.
$$
Thus $(H_1:_RM)\cap (H_2 :_RM)\cap (H_3:_RM)\subseteq (N:_RM)$
by \cite[Corollary of Theorem 9]{SM88}. Hence,  either  $(H_1:_RM)\cap (H_2 :_RM)\subseteq (N:_RM)$ or  $(H_1:_RM)\cap (H_3:_RM))\subseteq (N:_RM)$ or  $(H_2 :_RM)\cap (H_3:_RM)\subseteq (N:_RM)$
since $(N:_RM)$ is a strongly 2-irreducible ideal of $R$.
Therefore, either $H_1\cap H_2\subseteq N$ or $H_1\cap H_3\subseteq N$ or $H_2 \cap H_3\subseteq N$
 by \cite[Corollary 1.7]{BS89}.
\end{proof}

\begin{ex}
Consider the $\Bbb Z$-module $\Bbb Z_{p^tq^nr^m}$, where $p, q, r$ are prime integers and $t,n, m$ are natural numbers. 
\begin{itemize}
\item [(a)] By using Theorem \ref{p1.2} and Example \ref{e11ld.1}, one can see that $\bar{p^t}\Bbb Z_{p^tq^nr^m}$ and $\bar{q^nr^m}\Bbb Z_{p^tq^nr^m}$ are strongly 2-irreducible submodules of  $\Bbb Z_{p^tq^nr^m}$.
\item [(b)] $\bar{pqr}\Bbb Z_{p^3qr}=\bar{pq}Z_{p^3qr} \cap \bar{pr}\Bbb Z_{p^3qr} \cap \bar{qr}\Bbb Z_{p^3qr}$ implies that $\bar{pqr}\Bbb Z_{p^3qr}$ is not  a  2-irreducible submodule of  $\Bbb Z_{p^3qr}$.
\end{itemize}
\end{ex}

The following example shows that the concepts of strongly irreducible submodules and strongly 2-irreducible submodules are different in general.
\begin{ex}\label{e1.2}
Consider the $\Bbb Z$-module $\Bbb Z_6$.  Then $0=\bar{3}\Bbb Z_6 \cap \bar{2}\Bbb Z_6$ implies that the $0$  submodule of $\Bbb Z_6$ is not strongly irreducible. But $(0:_{\Bbb Z}\Bbb Z_6)=6\Bbb Z$ is a strongly 2-irreducible ideal of $\Bbb Z$ by Example \ref{e11ld.1}. Since the $\Bbb Z$-module $\Bbb Z_6$  is a finitely generated multiplication $\Bbb Z$-module, $0$ is a strongly 2-irreducible submodule of $\Bbb Z_6$ by Theorem \ref{p1.2}.
\end{ex}

\begin{lem}\label{p1.4}
Let $M$ be an $R$-module. If $N_1$ and $N_2$ are strongly irreducible submodules of $M$, then $N_1\cap N_2$ is a  strongly
2-irreducible submodule of $M$.
\end{lem}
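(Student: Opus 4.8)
The plan is to exploit the fact that a strongly irreducible submodule behaves like a prime submodule with respect to finite intersections: if a finite intersection of submodules is contained in a strongly irreducible submodule, then one of the terms already is. The first step I would record is the three-term version of this. If $A\cap B\cap C\subseteq N_i$ with $N_i$ strongly irreducible, then writing this as $A\cap(B\cap C)\subseteq N_i$ and applying the definition gives $A\subseteq N_i$ or $B\cap C\subseteq N_i$; a second application of the definition in the latter case shows that one of $A,B,C$ must be contained in $N_i$.

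Now set $N=N_1\cap N_2$. Since $N_1$ is a strongly irreducible submodule it is proper, so $N\subseteq N_1\subsetneq M$ and $N$ is proper. Suppose $H_1\cap H_2\cap H_3\subseteq N$ for submodules $H_1,H_2,H_3$ of $M$. Then $H_1\cap H_2\cap H_3\subseteq N_1$ and $H_1\cap H_2\cap H_3\subseteq N_2$, so by the observation above there are indices $i,j\in\{1,2,3\}$ with $H_i\subseteq N_1$ and $H_j\subseteq N_2$.

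It then remains to split into two cases. If $i=j$, then $H_i\subseteq N_1\cap N_2=N$, and choosing any $k\in\{1,2,3\}\setminus\{i\}$ we get $H_i\cap H_k\subseteq H_i\subseteq N$, where $H_i\cap H_k$ is one of the three pairwise intersections. If $i\neq j$, then $H_i\cap H_j\subseteq H_i\subseteq N_1$ and $H_i\cap H_j\subseteq H_j\subseteq N_2$, hence $H_i\cap H_j\subseteq N_1\cap N_2=N$, and again $H_i\cap H_j$ is one of the three pairwise intersections since $\{i,j\}\subseteq\{1,2,3\}$. In either case one of $H_1\cap H_2$, $H_2\cap H_3$, $H_1\cap H_3$ is contained in $N$, which is exactly the defining condition for $N$ to be a strongly $2$-irreducible submodule of $M$. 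The only mildly delicate point is the bookkeeping in the case $i=j$, where one must still exhibit a genuine pairwise intersection inside $N$; everything else is a direct unwinding of the definitions, so I do not anticipate a real obstacle.
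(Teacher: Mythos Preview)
Your argument is correct and is exactly the kind of direct verification the paper has in mind; the paper itself dismisses this lemma with ``This is straightforward.'' and gives no further details, so your write-up actually supplies what the authors omit.
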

\begin{proof}
This is straightforward.
\end{proof}

A proper submodule $P$ of an $R$-module $M$ is said to be \emph{prime} if for any $r \in R$ and $m \in M$ with $rm \in P$, we have $m \in P$ or $r \in (P:_RM)$ \cite{Da78}.
\begin{prop}\label{p1.7}
Let $M$ be a multiplication $R$-module and let $N_1$, $N_2$, and $N_3$ be prime submodules of $M$ such that $N_1 + N_2=N_1 +N_3=N_2 + N_3=M$. Then $N_1\cap N_2 \cap N_3$ is not a  strongly 2-irreducible submodule of $M$.
\end{prop}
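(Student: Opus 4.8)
The plan is to exhibit the failure of Definition \ref{111ld.1} directly, using the three prime submodules themselves as witnesses. Set $N:=N_1\cap N_2\cap N_3$; it is a proper submodule since $N\subseteq N_1\subsetneq M$. Taking $H_1=N_1$, $H_2=N_2$, $H_3=N_3$ we trivially have $H_1\cap H_2\cap H_3=N\subseteq N$, so everything reduces to checking that none of the pairwise intersections $N_i\cap N_j$ lies in $N$. Since $N\subseteq N_k$ for every $k$, it suffices to prove the three non-containments $N_1\cap N_2\not\subseteq N_3$, $N_1\cap N_3\not\subseteq N_2$, $N_2\cap N_3\not\subseteq N_1$; and because the hypotheses are symmetric in the indices $1,2,3$, it is enough to treat the first one.

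So suppose $N_1\cap N_2\subseteq N_3$, fix $m\in N_1$, and put $J:=(N_2:_RM)+(N_3:_RM)$. As $M$ is a multiplication module we have $N_i=(N_i:_RM)M$ for each $i$; hence $JM=(N_2:_RM)M+(N_3:_RM)M=N_2+N_3=M$, and moreover $(N_2:_RM)\subseteq(N_3:_RM)$ would force $N_2=(N_2:_RM)M\subseteq N_3$ and then $M=N_2+N_3=N_3$, which is impossible since $N_3$ is proper; thus $J\not\subseteq(N_3:_RM)$. Next observe that $Jm\subseteq N_3$: for $r\in(N_3:_RM)$ clearly $rm\in rM\subseteq N_3$, while for $r\in(N_2:_RM)$ we get $rm\in N_2$ (as $m\in M$) and $rm\in N_1$ (as $m\in N_1$), so $rm\in N_1\cap N_2\subseteq N_3$. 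Choosing $j\in J\setminus(N_3:_RM)$, we have $jm\in Jm\subseteq N_3$ but $jM\not\subseteq N_3$, so the primeness of $N_3$ yields $m\in N_3$. As $m\in N_1$ was arbitrary this gives $N_1\subseteq N_3$, whence $M=N_1+N_3=N_3$ — a contradiction. Therefore $N_1\cap N_2\not\subseteq N_3$, and by symmetry the remaining two non-containments hold as well, so $N$ is not a strongly 2-irreducible submodule of $M$.

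The argument is essentially elementary; the multiplication hypothesis enters only through the identities $N_i=(N_i:_RM)M$ (used to get $JM=M$ and to pass from $(N_2:_RM)\subseteq(N_3:_RM)$ to $N_2\subseteq N_3$), and the one genuine idea is that pairwise comaximality produces an element $j$ outside $(N_3:_RM)$ that can be fed into the definition of the prime submodule $N_3$. I do not anticipate any real obstacle here: one could instead run the same proof through the submodule product, using the standard facts that $N_1+N_2=M$ implies $N_1\cap N_2=(N_1:_RM)(N_2:_RM)M$ and that a prime submodule of a multiplication module absorbs such products, which again collapses to the comaximality contradiction $M=N_i+N_3=N_3$.
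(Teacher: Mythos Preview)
Your proof is correct and follows essentially the same strategy as the paper: assume by contradiction that $N=N_1\cap N_2\cap N_3$ is strongly 2-irreducible, reduce (by symmetry) to deriving a contradiction from $N_1\cap N_2\subseteq N_3$, and use primeness of $N_3$ together with the multiplication hypothesis to force some $N_i\subseteq N_3$, contradicting comaximality. The only difference is in execution: the paper argues via the product $(N_1:_RM)N_2\subseteq N_1\cap N_2\subseteq N_3$ and the ideal-wise primeness criterion to get $N_2\subseteq N_3$ or $(N_1:_RM)\subseteq(N_3:_RM)$ (hence $N_1\subseteq N_3$), which is exactly the alternative you sketch in your final paragraph and is a bit shorter than your element-wise computation with $J=(N_2:_RM)+(N_3:_RM)$ (note that your equality $JM=M$ is never actually used).
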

\begin{proof}
Assume on the contrary that $N_1\cap N_2 \cap N_3$ is a  strongly 2-irreducible submodule of $M$. Then $N_1\cap N_2 \cap N_3\subseteq N_1\cap N_2 \cap N_3$ implies that either $N_1\cap N_2 \subseteq N_1\cap N_2 \cap N_3$ or $N_1\cap N_3\subseteq N_1\cap N_2 \cap N_3$ or $N_2 \cap N_3\subseteq N_1\cap N_2 \cap N_3$. We can assume without loss of generality that $N_1\cap N_2 \subseteq N_1\cap N_2 \cap N_3$. Then $N_1\cap N_2 \subseteq N_3$. It follows that $(N_1:_RM)N_2 \subseteq N_3$. As $N_3$ is a prime submodule of $M$, we have  $N_2 \subseteq N_3$ or $(N_2:_RM) \subseteq (N_3:_RM)$. Thus $N_2 \subseteq N_3$ or $N_1 \subseteq N_3$ since $M$ is a multiplication $R$-module. Therefore, $N_3=M$, which is a contradiction.
\end{proof}

\begin{cor}\label{c1.8}
Let $M$ be a multiplication $R$-module such that every proper submodule of $M$ is  strongly 2-irreducible. Then $M$ has at most two maximal submodules.
\end{cor}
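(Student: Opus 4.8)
The plan is to argue by contradiction. Suppose $M$ had three pairwise distinct maximal submodules $N_1$, $N_2$, $N_3$; I would show this contradicts Proposition \ref{p1.7}. The key observation is that three distinct maximal submodules automatically satisfy the hypotheses of that proposition, whereas the standing assumption on $M$ forces their triple intersection to be strongly 2-irreducible.

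First I would record two standard facts about maximal submodules. If $N_i$ and $N_j$ are distinct maximal submodules of $M$, then $N_i + N_j = M$: since $N_i \subseteq N_i + N_j$, maximality of $N_i$ gives $N_i + N_j = N_i$ or $N_i + N_j = M$, and in the first case $N_j \subseteq N_i$ would force $N_j = N_i$ by maximality of $N_j$, a contradiction. Moreover, every maximal submodule $N$ of $M$ is prime: if $rm \in N$ with $m \notin N$, then $N + Rm = M$, so any $x \in M$ can be written $x = n + sm$ with $n \in N$ and $s \in R$, and then $rx = rn + s(rm) \in N$; hence $rM \subseteq N$, that is, $r \in (N:_RM)$.

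Next I would note that $N_1 \cap N_2 \cap N_3$ is a proper submodule of $M$ (it is contained in $N_1$, which is not $M$), so by hypothesis it is a strongly 2-irreducible submodule of $M$. On the other hand, by the two facts above, $N_1$, $N_2$, $N_3$ are prime submodules with $N_1 + N_2 = N_1 + N_3 = N_2 + N_3 = M$, so Proposition \ref{p1.7} (which applies since $M$ is a multiplication module) shows that $N_1 \cap N_2 \cap N_3$ is \emph{not} strongly 2-irreducible. This contradiction shows that $M$ has at most two maximal submodules.

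I do not anticipate a genuine difficulty: the real content has already been isolated in Proposition \ref{p1.7}, and the remaining ingredients --- comaximality of distinct maximal submodules and primeness of maximal submodules --- are routine. The only minor point is to handle the degenerate cases (when $M$ has fewer than three, or no, maximal submodules), but there the conclusion is immediate, so it is enough to treat the case of three distinct maximal submodules.
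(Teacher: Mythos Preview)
Your proposal is correct and follows exactly the same approach as the paper, which simply says the corollary follows from Proposition~\ref{p1.7}. You have merely spelled out the routine details (primeness and pairwise comaximality of distinct maximal submodules, and properness of the triple intersection) that the paper leaves implicit.
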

\begin{proof}
This follows from Proposition \ref{p1.7}
\end{proof}

Let $N$ be a submodule of an $R$-module $M$. The intersection of all prime submodules of $M$ containing $N$ is said to be the (\emph{prime}) \emph{radical} of $N$ and denote by $rad_MN$ (or simply by $rad(N)$). In case $N$ does not contained in any prime submodule, the radical of $N$ is defined to be $M$.
Also, $N \not =M$ is said to be a \emph{radical submodule} of $M$ if $rad(N)=N$ \cite{MM86}

\begin{lem}\label{l11.9}
Let $I$ be an ideal of $R$ and $N$ be a submodule of an $R$-module $M$. Then $rad(IN)=rad(N) \cap rad(IM)$.
\end{lem}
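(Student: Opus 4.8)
The plan is to reduce the identity to a statement about which prime submodules of $M$ contain $IN$. The key claim is: \emph{for a prime submodule $P$ of $M$, one has $IN \subseteq P$ if and only if $N \subseteq P$ or $IM \subseteq P$}. The "if" direction is immediate, since $IN \subseteq N$ and $IN \subseteq IM$. For the "only if" direction, suppose $IN \subseteq P$ but $IM \not\subseteq P$; then there is $a \in I$ with $aM \not\subseteq P$, i.e. $a \notin (P:_RM)$. For every $n \in N$ we have $an \in IN \subseteq P$, so primeness of $P$ forces $n \in P$ (as $a \notin (P:_RM)$). Hence $N \subseteq P$, proving the claim.

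With the claim in hand, the two inclusions follow formally. Since $IN \subseteq N$ and $IN \subseteq IM$, every prime submodule containing $N$ (resp.\ $IM$) contains $IN$, so $rad(IN) \subseteq rad(N)$ and $rad(IN) \subseteq rad(IM)$, giving $rad(IN) \subseteq rad(N) \cap rad(IM)$. Conversely, let $P$ be any prime submodule of $M$ with $IN \subseteq P$. By the claim, either $N \subseteq P$, whence $rad(N) \cap rad(IM) \subseteq rad(N) \subseteq P$, or $IM \subseteq P$, whence $rad(N) \cap rad(IM) \subseteq rad(IM) \subseteq P$. In either case $rad(N) \cap rad(IM) \subseteq P$; intersecting over all such $P$ yields $rad(N) \cap rad(IM) \subseteq rad(IN)$.

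Finally I would address the degenerate cases built into the definition of the radical. If $IN$ is contained in no prime submodule, then by the claim neither $N$ nor $IM$ is contained in any prime submodule, so $rad(IN) = rad(N) = rad(IM) = M$ and the identity holds trivially; otherwise the argument above applies verbatim. The only step requiring genuine work is the "only if" half of the claim, where one uses the definition of a prime submodule to pass from $an \in P$ for all $n \in N$ to $N \subseteq P$; the rest is a routine manipulation of intersections of prime submodules.
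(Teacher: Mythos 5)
Your proof is correct. The heart of your argument --- that for a prime submodule $P$ one has $IN \subseteq P$ if and only if $N \subseteq P$ or $IM \subseteq P$ --- is also the engine of the paper's proof (there phrased as ``$N \subseteq P$ or $I \subseteq (P:_RM)$''), and your verification of it via the definition of a prime submodule is sound, including the observation that $IM \not\subseteq P$ yields some $a \in I$ with $a \notin (P:_RM)$. Where you diverge is in how the radicals are assembled: the paper invokes an external identity of Lu, $rad(N \cap IM) = rad(N) \cap rad(IM)$, uses $IN \subseteq N \cap IM$ for one inclusion, and routes the reverse inclusion through $N \cap IM \subseteq P$ back to that identity; you instead work prime-by-prime throughout, showing directly that the set of primes containing $IN$ is exactly the union of those containing $N$ and those containing $IM$, and then intersecting. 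Your version is self-contained and more elementary (no citation needed), and it handles the degenerate convention $rad(\,\cdot\,)=M$ explicitly, which the paper leaves implicit; the paper's version is shorter on the page but outsources the distributivity of $rad$ over the intersection to the cited result. Both are valid; yours arguably makes it clearer why the identity holds.
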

\begin{proof}
By \cite [Corollary of Theorem 6]{Lu89}, we have $rad(N\cap IM))= rad(N)\cap rad(IM)$. Since $IN \subseteq IM \cap N$, $rad(IN) \subseteq rad(IM \cap N)$. Thus $rad(IN) \subseteq rad(N)\cap rad(IM)$. Now let $P$ be a prime submodule of $M$ such that $IN\subseteq P$. As $P$ is prime, $N \subseteq P$ or $I \subseteq (P:_RM)$. Hence  $N \cap IM \subseteq P$. This in tourn implies that $rad(N)\cap rad(IM)\subseteq  rad(IN) $, as desired.
\end{proof}

A proper ideal $I$ of $R$ is said to be a \emph{2-absorbing ideal}
of $R$ if whenever $a, b, c \in R$ and $abc \in I$, then $ab \in I$ or
 $ac \in I$ or $bc \in I$ \cite{Ba07}.

A proper submodule $N$ of an $R$-module $M$ is said to be a \emph{2-absorbing primary submodule} of $M$ if whenever $a, b \in R$, $m \in M$, and $abm \in N$, then $am \in rad(N)$ or $bm \in rad(N)$  or $ab \in (N :_R M)$ \cite{mtoa16}.

A proper submodule $N$ of an $R$-module $M$ is called a \emph{2-absorbing submodule }of
 $M$ if whenever $abm \in N$
for some $a, b \in R$ and $m \in M$, then $am \in N$ or $bm \in N$ or
$ab \in (N :_R M)$ \cite{YS11} and  \cite{pb12}.

\begin{thm}\label{t1.9}
Let $M$ be a finitely generated multiplication $R$-module and $N$ be a radical submodule of $M$. Then the following
conditions are equivalent:
\begin{itemize}
\item [(a)] $N$ is a strongly 2-irreducible submodule of $M$;
\item [(b)] $N$ is a 2-absorbing submodule of $M$;
\item [(c)] $N$ is a 2-absorbing primary submodule of $M$;
\item [(d)] $N$ is either a prime submodule of $M$ or is an intersection of exactly two prime submodules of $M$.
\end{itemize}
\end{thm}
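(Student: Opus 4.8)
The plan is to prove the cycle of implications $(d)\Rightarrow(a)\Rightarrow(b)\Rightarrow(c)\Rightarrow(d)$, exploiting that $M$ is finitely generated multiplication so that the lattice of submodules behaves like the ideal lattice of $R$ via $N\mapsto(N:_RM)$, and that $N$ is radical. Throughout I would freely use Theorem \ref{p1.2} (so $N$ strongly 2-irreducible in $M$ iff $(N:_RM)$ strongly 2-irreducible in $R$), the correspondence $N=(N:_RM)M$ of \cite[Corollary 1.7]{BS89} and \cite[Corollary of Theorem 9]{SM88}, and the fact that in a multiplication module prime submodules correspond to prime ideals containing $(N:_RM)$.

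First, $(d)\Rightarrow(a)$: if $N$ is prime it is strongly irreducible (a prime submodule of a multiplication module is strongly irreducible, since $H_1\cap H_2\subseteq N$ gives $(H_1:_RM)(H_2:_RM)M\subseteq N$, hence one of $H_1,H_2\subseteq N$ by primeness and the multiplication property), hence strongly 2-irreducible; if $N=P_1\cap P_2$ with $P_1,P_2$ prime, then each $P_i$ is strongly irreducible and Lemma \ref{p1.4} gives that $N$ is strongly 2-irreducible. Next, $(a)\Rightarrow(b)$: suppose $abm\in N$ for $a,b\in R$, $m\in M$; then $(RaM+Rm)\cap(RbM+Rm)\cap(RabM)\subseteq N$ — more carefully, I would set $H_1=aM+Rm$, $H_2=bM+Rm$, $H_3=abM$ and check $H_1\cap H_2\cap H_3\subseteq N$, then apply strong 2-irreducibility to conclude $am\in N$ or $bm\in N$ or $ab\in(N:_RM)$ after translating each of the three intersection-inclusions. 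The implication $(b)\Rightarrow(c)$ is immediate since $N\subseteq rad(N)$, so a 2-absorbing submodule is automatically 2-absorbing primary.

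The main work — and the step I expect to be the obstacle — is $(c)\Rightarrow(d)$. Here I would pass to the ideal $I=(N:_RM)$, which by \cite[Corollary 1.7]{BS89} and the finitely generated multiplication hypothesis satisfies $IM=N$ and $rad(I)M=rad(N)=N$, whence (using that $M$ is faithful-enough / cancellative for ideals containing the annihilator) $rad(I)=I$, i.e. $I$ is a radical ideal of $R$; moreover $N$ 2-absorbing primary forces $I$ to be a 2-absorbing primary, hence 2-absorbing, ideal of $R$. Then I invoke the structure theorem for 2-absorbing radical ideals: a radical 2-absorbing ideal of a commutative ring is either prime or an intersection of exactly two prime ideals (this is the ideal-theoretic analogue; if the paper does not wish to cite it, one reproves it directly — $\sqrt I=I$ and $I$ 2-absorbing imply $I$ has at most two minimal primes $P_1,P_2$ with $I=P_1\cap P_2$, since $abc\in I$ with the three pairwise products outside would violate 2-absorbing after localizing at a minimal prime). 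Finally I pull this back: $I=P$ prime gives $N=PM$ prime in $M$, and $I=P_1\cap P_2$ gives $N=IM=(P_1\cap P_2)M=P_1M\cap P_2M$ with each $P_iM$ prime, using once more \cite[Corollary 1.7]{BS89} to see intersection is preserved under $(-)M$ for ideals containing $\mathrm{Ann}_R(M)$. The delicate points to watch are the cancellation $rad(I)=I$ (needs the finitely generated multiplication structure, cf. Lemma \ref{l11.9}) and the claim that the $(-)M$ operation commutes with finite intersections of such ideals, which is exactly where \cite[Corollary 1.7]{BS89} and \cite[Corollary of Theorem 9]{SM88} are used.
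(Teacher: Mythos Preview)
Your cycle $(d)\Rightarrow(a)$, $(b)\Rightarrow(c)$, and $(c)\Rightarrow(d)$ is fine and matches the paper's route (the paper factors the last step as $(c)\Rightarrow(b)\Rightarrow(d)$ via Badawi's description of $\sqrt{(N:_RM)}$, but your direct use of the structure of radical 2-absorbing ideals is equivalent).

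The genuine gap is in $(a)\Rightarrow(b)$. Your choice $H_1=aM+Rm$, $H_2=bM+Rm$, $H_3=abM$ does \emph{not} satisfy $H_1\cap H_2\cap H_3\subseteq N$ in general. Take $R=M=\Bbb Z$, $N=8\Bbb Z$, $a=b=m=2$: then $abm=8\in N$, but $H_1=H_2=2\Bbb Z$, $H_3=4\Bbb Z$, and $H_1\cap H_2\cap H_3=4\Bbb Z\not\subseteq 8\Bbb Z$. More to the point, this same example shows that $(a)\Rightarrow(b)$ is \emph{false} without the radical hypothesis: $8\Bbb Z$ is strongly 2-irreducible (Example~\ref{e11ld.1}) but not 2-absorbing, since $2\cdot2\cdot2\in 8\Bbb Z$ while $2\cdot2\notin 8\Bbb Z$. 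Any argument for $(a)\Rightarrow(b)$ that does not invoke $rad(N)=N$ is therefore doomed, and yours never does.

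The paper's fix is to work with the ideal formulation of 2-absorbing (equivalent in a multiplication module): given $IJK\subseteq N$, Lemma~\ref{l11.9} is applied twice to obtain
\[
K\cap IM\cap JM\subseteq rad(K)\cap rad(IM)\cap rad(JM)=rad(IJK)\subseteq rad(N)=N,
\]
and \emph{this} is the triple intersection to which strong 2-irreducibility is applied. The radical hypothesis enters precisely at the last step, and the resulting pairwise inclusions $K\cap IM\subseteq N$, $K\cap JM\subseteq N$, $IM\cap JM\subseteq N$ immediately give $IK\subseteq N$, $JK\subseteq N$, $IJM\subseteq N$ respectively.
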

\begin{proof}
$(a) \Rightarrow (b)$
Let $I$, $J$ be ideals of $R$ and $K$ be a submodule of $M$ such that $IJK \subseteq N$. Then
 by using  Lemma \ref{l11.9},
$$
 K\cap IM \cap JM \subseteq rad(K)\cap rad(IM)\cap  rad(JM)=rad(IJK) \subseteq rad(N)=N
$$
Hence by part (a), either $ K\cap IM \subseteq N$ or $ K\cap JM \subseteq N$ or $IM \cap JM \subseteq N$. Thus either $IK\subseteq N$ or $JK\subseteq N$ or $IJM \subseteq N$ as needed.

$(b) \Rightarrow(c)$
This is clear.

$(c) \Rightarrow (b)$
This is clear by using \cite[Theorem 2.6]{mtoa16}.

$(b) \Rightarrow (d)$
Since $N$ is a 2-absorbing submodule of $M$,  $(N:_RM)$ is a 2-absorbing ideal of $R$ by \cite[Proposition 1]{DS12}.
Hence $\sqrt{(N:_RM)} = P$ is a prime ideal of $R$  or $\sqrt{(N:_RM)} =  P \cap Q$, where $P$ and $Q$ are distinct
prime ideals of $R$ that are minimal over $(N:_RM)$ by \cite[Theorem 2.4]{Ba07}.
We have $\sqrt{(N:_RM)}M=rad(N)$ by \cite[Theorem 4]{MM86}.
If $\sqrt{(N:_RM)} = P$, then $PM=rad(N)$. Since $M$ is a multiplication $R$-module, $PM$ is a prime submodule of $M$ by \cite[Corollary 2.11]{BS89}. Now let $\sqrt{Ann_R(N)} =  P \cap Q$, where $P$ and $Q$ are distinct
prime ideals of $R$. Then $(P\cap Q)M=rad(N)$. By \cite[Corollary 1.7]{BS89}, $(P\cap Q)M=PM\cap QM$.  Thus  by \cite[Corollary 2.11]{BS89}, $rad(N)$ is an intersection of two prime submodules of $M$. Now, we prove the claim by
assumption that $N$ is a radical submodule of $M$.

$(d) \Rightarrow (a)$
This follows from Lemma \ref{p1.4}.
\end{proof}

The following example shows that parts $(a)$ and $(b)$ of Theorem \ref{t1.9} are not equivalent in general.
\begin{ex}\label{e11.2}
Consider the submodule $G_t=\langle 1/p^t+\Bbb Z\rangle$ of the $\Bbb Z$-module $\Bbb Z_{p^\infty}$.  Then $G_t$ is a strongly 2-irreducible submodule of $\Bbb Z_{p^\infty}$.  But  $G_t$ is not a 2-absorbing submodule of $\Bbb Z_{p^\infty}$. It should be note that the $\Bbb Z$-module $\Bbb Z_{p^\infty}$ is not a finitely genrated multiplication $\Bbb Z$-module.
\end{ex}

A submodule $N$ of an $R$-module $M$ is said to be \emph{pure} if $IN=IM \cap N$ for every ideal $I$ of $R$ \cite{AF74}.
Also, an $R$-module $M$ is said to be \emph{fully pure} if every submodule of $M$ is pure \cite{AF122}.
\begin{thm}\label{tt1.9}
Let $M$ be a fully pure multiplication $R$-module and $N$ be a submodule of $M$. Then the following
conditions are equivalent:
\begin{itemize}
\item [(a)] $N$ is a strongly 2-irreducible submodule of $M$;
\item [(b)] $N$ is a 2-absorbing submodule of $M$;
\item [(c)] $N$ is a 2-irreducible submodule of $M$.
\end{itemize}
\end{thm}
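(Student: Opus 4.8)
The plan is to extract from the hypotheses one structural fact and let it drive everything. Since $M$ is a multiplication module, every submodule $H$ satisfies $H=(H:_RM)M$; since $M$ is in addition fully pure, applying purity of the submodule $JM$ to an ideal $I$ gives $IM\cap JM=I(JM)=IJM$. Iterating, for submodules $H_i=I_iM$ with $I_i=(H_i:_RM)$ one gets $H_1\cap H_2\cap H_3=I_1I_2I_3M$, and the same identities yield $H_1\cap(H_2+H_3)=I_1(I_2+I_3)M=(H_1\cap H_2)+(H_1\cap H_3)$, so $M$ is a distributive $R$-module. For the cyclic case we also record $Rm=(Rm:_RM)M$ and hence $aM\cap Rm=Ra\cdot Rm=R(am)$ and $abM\cap Rm=Rab\cdot Rm=R(abm)$, again by purity of $Rm$ applied to the principal ideals $Ra$ and $Rab$. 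I expect the verification of these identities — in particular $IM\cap JM=IJM$ and the cyclic computations — to be the only real work; each step is short, but the whole argument rests on them. (Throughout, $N$ is understood to be a proper submodule, the case $N=M$ being degenerate for (a) and (b).)

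Given distributivity, the equivalence $(a)\Leftrightarrow(c)$ is immediate: $(a)\Rightarrow(c)$ is Proposition \ref{p2.4}(a), which needs no hypothesis, and the converse is Proposition \ref{p22.4} applied to the distributive module $M$. It therefore remains only to splice $(b)$ into this equivalence, which I would do by proving $(a)\Rightarrow(b)$ and $(b)\Rightarrow(c)$, closing the cycle $(a)\Rightarrow(b)\Rightarrow(c)\Rightarrow(a)$.

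For $(a)\Rightarrow(b)$: let $a,b\in R$ and $m\in M$ with $abm\in N$, and apply strong $2$-irreducibility to $H_1=aM$, $H_2=bM$, $H_3=Rm$. By the identities above, $H_1\cap H_2\cap H_3=abM\cap Rm=R(abm)\subseteq N$, so one of $H_1\cap H_2=abM$, $H_1\cap H_3=R(am)$, $H_2\cap H_3=R(bm)$ is contained in $N$; these three possibilities read exactly $ab\in(N:_RM)$, $am\in N$, $bm\in N$, so $N$ is a $2$-absorbing submodule. For $(b)\Rightarrow(c)$: suppose $N=H_1\cap H_2\cap H_3$ and write $H_i=I_iM$, so that $I_1I_2I_3M=H_1\cap H_2\cap H_3=N$; viewing this as $I_1I_2K\subseteq N$ with $K=I_3M$ and using the ideal–submodule characterization of $2$-absorbing submodules \cite{pb12} (namely $I_1I_2K\subseteq N$ forces $I_1K\subseteq N$ or $I_2K\subseteq N$ or $I_1I_2\subseteq(N:_RM)$), we obtain one of $H_1\cap H_3$, $H_2\cap H_3$, $H_1\cap H_2$ contained in $N$; since $N$ is already contained in every pairwise intersection, that containment is an equality, hence $N$ is $2$-irreducible. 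Together with $(a)\Leftrightarrow(c)$ this gives the full equivalence. The main obstacle, as noted, is purely the bookkeeping around the purity/multiplication identities; once those are in hand the rest is short.
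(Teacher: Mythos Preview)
Your proof is correct and follows essentially the same approach as the paper: both pivot on the purity identity $IM\cap JM=IJM$ (and its iterate $H_1\cap H_2\cap H_3=I_1I_2I_3M$) to link strong $2$-irreducibility with $2$-absorbing, and both handle $(a)\Leftrightarrow(c)$ via distributivity of $M$ and Proposition~\ref{p22.4}. The only differences are organizational: the paper proves the ideal--submodule form of $2$-absorbing in $(a)\Rightarrow(b)$ whereas you prove the elementwise form, and the paper closes the loop by a direct $(b)\Rightarrow(a)$ through the $2$-absorbing ideal $(N:_RM)$ (citing \cite{DS12}) rather than your $(b)\Rightarrow(c)\Rightarrow(a)$; also the paper cites \cite{AF122} for distributivity while you derive it, which is a small bonus of your write-up.
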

\begin{proof}
$(a) \Rightarrow (b)$
Let $I$, $J$ be ideals of $R$ and $K$ be a submodule of $M$ such that $IJK \subseteq N$. Then since $M$ is fully pure,
$$
K \cap IM \cap JM=IJK \subseteq N.
$$
Hence by part (a), either $K \cap IM\subseteq N$ or $K \cap JM\subseteq N$ or $IM \cap JM\subseteq N$. Thus either $IK\subseteq N$ or $JK\subseteq N$ or $IJM \subseteq N$.

$(b) \Rightarrow (a)$
Let $H_1\cap H_2 \cap H_3\subseteq N$ for submodules $H_1$, $H_2$ and $H_3$ of $M$. Then
$$
(H_1:_RM)\cap (H_2:_RM) \cap (H_3:_RM)=(H_1\cap H_2 \cap H_3:_RM)\subseteq (N:_RM).
$$
Thus either $(H_1:_RM)(H_2:_RM)\subseteq (N:_RM)$ or  $(H_1:_RM)(H_3:_RM)\subseteq (N:_RM)$ or  $(H_2:_RM)(H_3:_RM)\subseteq (N:_RM)$ since $(N:_RM)$ is a 2-absorbing ideal of $R$ by \cite[Proposition 1]{DS12}.
We can assume without loss of generality that  $(H_1:_RM)(H_2:_RM)\subseteq (N:_RM)$. Thus as $M$ is fully pure, we have
$$
 (H_1:_RM)M \cap (H_2:_RM)M\subseteq (N:_RM)M \subseteq N.
$$
Therefore, $H_1 \cap H_2 \subseteq N$ since $M$ is a multiplication $R$-module.

$(a) \Leftrightarrow (c)$
By \cite[proof of Theorem 2.19]{AF122}, $M$ is a distributive $R$-module. Now the result follows from Proposition \ref{p22.4}.
\end{proof}

\begin{lem}\label{t1.5}
Let $M$ be an $R$-module, $S$ a multiplicatively closed subset of $R$, and $N$ be a finitely generated submodule of $M$. If $S^{-1}N \subseteq S^{-1}K$ for a submodule $K$ of $M$, then there exists $s \in S$ such that $sN \subseteq K$.
\end{lem}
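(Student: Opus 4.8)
The plan is to clear denominators uniformly, exploiting finite generation of $N$ to replace finitely many ``local'' witnesses by a single element of $S$. First I would fix a finite generating set $N = Rn_1 + \cdots + Rn_t$. For each $i$ the element $n_i/1$ lies in $S^{-1}N \subseteq S^{-1}K$, so there are $k_i \in K$ and $s_i \in S$ with $n_i/1 = k_i/s_i$ in $S^{-1}M$. By the definition of equality in a module of fractions, this yields some $u_i \in S$ with $u_i(s_i n_i - k_i) = 0$ in $M$, hence $u_i s_i n_i = u_i k_i \in K$.

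Next I would set $s := \prod_{i=1}^{t} u_i s_i \in S$. For each fixed $i$, write $s = t_i (u_i s_i)$ with $t_i \in R$; then $s n_i = t_i (u_i s_i n_i) \in K$ because $u_i s_i n_i \in K$ and $K$ is a submodule. Therefore $s N = s(Rn_1 + \cdots + Rn_t) = R\,(s n_1) + \cdots + R\,(s n_t) \subseteq K$, which is the desired conclusion.

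I do not expect a genuine obstacle here; the one point requiring care is that the \emph{same} $s$ must work for all generators at once, and this is exactly where the hypothesis that $N$ is finitely generated enters — the product $\prod_{i=1}^{t} u_i s_i$ makes sense only because the index set is finite, and in general one cannot amalgamate infinitely many denominators into a single element of $S$. Everything else is the routine translation between statements in $S^{-1}M$ and statements in $M$.
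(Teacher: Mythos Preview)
Your argument is correct and is exactly the standard denominator-clearing proof; the paper itself records the lemma as ``straightforward'' without spelling out any details, so your proposal supplies precisely what the paper omits.
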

\begin{proof}
 This is straightforward.
\end{proof}

\begin{prop}\label{p1.6}
Let $M$ be an $R$-module, $S$ be a multiplicatively closed subset of $R$ and $N$ be a finitely generated prime strongly 2-irreducible submodule of $M$ such that $(N:_RM) \cap S=\emptyset$. Then $S^{-1}N$ is a strongly 2-irreducible submodule of $S^{-1}M$ if $S^{-1}N\not =S^{-1}M$.
\end{prop}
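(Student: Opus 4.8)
The plan is to exploit the standard fact that every $S^{-1}R$-submodule of $S^{-1}M$ is extended, together with the observation that the primeness of $N$ and the hypothesis $(N:_RM)\cap S=\emptyset$ force $N$ to coincide with the contraction of $S^{-1}N$. This is what allows an inclusion among submodules of $S^{-1}M$ to be pulled back to an inclusion in $M$, where the strong $2$-irreducibility of $N$ can be applied, and then pushed forward again.

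Concretely, I would first record that if $H$ is any submodule of $S^{-1}M$ and $K:=\{m\in M:\ m/1\in H\}$ is its contraction, then $H=S^{-1}K$ (immediate, since every $1/s$ is a unit in $S^{-1}R$), and that contraction preserves inclusions and finite intersections. Next I would show $N$ is \emph{$S$-saturated}: if $s\in S$, $m\in M$ and $sm\in N$, then since $N$ is prime either $m\in N$ or $s\in(N:_RM)$, and the latter is impossible because $(N:_RM)\cap S=\emptyset$; hence $m\in N$. It then follows that the contraction of $S^{-1}N$ is exactly $N$: if $m/1\in S^{-1}N$ then $tsm\in N$ for suitable $s,t\in S$, so $m\in N$ by $S$-saturation (the reverse inclusion being trivial).

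With these preliminaries, take submodules $H_1,H_2,H_3$ of $S^{-1}M$ with $H_1\cap H_2\cap H_3\subseteq S^{-1}N$, and write $H_i=S^{-1}K_i$ where $K_i$ is the contraction of $H_i$. Then $K_1\cap K_2\cap K_3$ is contained in the contraction of $H_1\cap H_2\cap H_3$, hence in the contraction of $S^{-1}N$, which is $N$; so $K_1\cap K_2\cap K_3\subseteq N$. Since $N$ is a strongly $2$-irreducible submodule of $M$, after relabelling we may assume $K_1\cap K_2\subseteq N$, and localizing gives $H_1\cap H_2=S^{-1}K_1\cap S^{-1}K_2=S^{-1}(K_1\cap K_2)\subseteq S^{-1}N$. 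As $S^{-1}N\ne S^{-1}M$ by hypothesis, $S^{-1}N$ is proper, and this implication shows it is strongly $2$-irreducible.

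I expect the only delicate point to be the bookkeeping identifying the contraction of $S^{-1}N$ with $N$ rather than with a possibly larger submodule; this is precisely where both the primeness of $N$ and the disjointness $(N:_RM)\cap S=\emptyset$ are used. An alternative that makes visible use of the finite generation hypothesis is to invoke Theorem~\ref{t2.4} and reduce to testing the condition on cyclic submodules $S^{-1}R\,\alpha+S^{-1}R\,\beta=S^{-1}(Rx+Ry)$ of $S^{-1}M$; the relevant submodules of $M$ are then finitely generated, Lemma~\ref{t1.5} applies, and primeness again upgrades a relation $sL\subseteq N$ to $L\subseteq N$, after which Theorem~\ref{t2.4} applied to $N$ in $M$ and one more localization complete the proof.
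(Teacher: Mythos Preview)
Your argument is correct and follows the same broad strategy as the paper: pull the inclusion back to $M$, use primeness together with $(N:_RM)\cap S=\emptyset$ to clear the elements of $S$, apply strong $2$-irreducibility of $N$ in $M$, and then localize again. The mechanism for the pull-back step differs, however. The paper writes the three submodules as $S^{-1}H_i$, notes that $S^{-1}(H_1\cap H_2\cap H_3)\subseteq S^{-1}N$, and then invokes Lemma~\ref{t1.5} to produce $s\in S$ with $s(H_1\cap H_2\cap H_3)\subseteq N$, whence $H_1\cap H_2\cap H_3\subseteq N$ by primeness; this is where the finite-generation hypothesis is meant to enter. Your contraction/saturation argument bypasses Lemma~\ref{t1.5} entirely and, as you correctly observe, never uses that $N$ is finitely generated. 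In fact your route is cleaner: the paper's application of Lemma~\ref{t1.5} is a bit loose, since that lemma requires the submodule on the \emph{left} of the localized inclusion to be finitely generated, which here would be $H_1\cap H_2\cap H_3$ rather than $N$; your saturation argument avoids this issue altogether.
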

\begin{proof}
Let $S^{-1}H_1\cap S^{-1}H_2 \cap S^{-1}H_3\subseteq S^{-1}N$ for submodules $S^{-1}H_1$, $S^{-1}H_2$ and $S^{-1}H_3$ of $S^{-1}M$. Then
$S^{-1}(H_1\cap H_2 \cap H_3)\subseteq S^{-1}N$.  By Lemma \ref{t1.5}, there exists $s \in S$ such that $s (H_1 \cap H_2 \cap H_3) \subseteq N$. This implies that $H_1 \cap H_2 \cap H_3 \subseteq N$ since $N$ is prime and $(N:_RM) \cap S=\emptyset$. Now as $N$ is a strongly 2-irreducible submodule of $M$, we have either $H_1 \cap H_2 \subseteq N$ or $H_1 \cap H_3 \subseteq N$ or $H_2 \cap H_3 \subseteq N$. Therefore, either  $S^{-1}H_1\cap S^{-1}H_2\subseteq S^{-1}N$ or $S^{-1}H_1\cap S^{-1}H_3\subseteq S^{-1}N$ or $S^{-1}H_2\cap S^{-1}H_3\subseteq S^{-1}N$, as needed.
\end{proof}

\begin{prop}\label{p8.21}
Let $M$ be an $R$-module and $\{K_i\}_{i \in I}$ be a chain of strongly
2-irreducible submodules of $M$. Then $\cap_{i \in I}K_i$ is a strongly 2-irreducible submodule of $M$.
\end{prop}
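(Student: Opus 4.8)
The plan is to reduce the statement for the chain $\{K_i\}_{i\in I}$ to the defining property of a single strongly 2-irreducible submodule by a standard ``pigeonhole on a chain'' argument. Set $K=\bigcap_{i\in I}K_i$; this is a proper submodule of $M$ since each $K_i$ is proper (if $K=M$ then some $K_i=M$, impossible). Suppose $H_1\cap H_2\cap H_3\subseteq K$ for submodules $H_1,H_2,H_3$ of $M$, and assume for contradiction that none of the three pairwise intersections lies in $K$. The first step is to observe that, because $K=\bigcap_i K_i$, for each pair $\{a,b\}\subseteq\{1,2,3\}$ the failure $H_a\cap H_b\not\subseteq K$ produces an index $i(a,b)\in I$ with $H_a\cap H_b\not\subseteq K_{i(a,b)}$.

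The second step is to use the chain condition: among the three indices $i(1,2),\,i(1,3),\,i(2,3)$ there is a smallest one in the chain, say $i_0$, so that $K_{i_0}\subseteq K_{i(a,b)}$ for all three pairs. Then for every pair $\{a,b\}$ we still have $H_a\cap H_b\not\subseteq K_{i_0}$, because an inclusion $H_a\cap H_b\subseteq K_{i_0}$ would force $H_a\cap H_b\subseteq K_{i(a,b)}$, contradicting the choice of $i(a,b)$. On the other hand $H_1\cap H_2\cap H_3\subseteq K\subseteq K_{i_0}$. This exactly contradicts the hypothesis that $K_{i_0}$ is a strongly 2-irreducible submodule of $M$, which would require one of $H_1\cap H_2$, $H_2\cap H_3$, $H_1\cap H_3$ to be contained in $K_{i_0}$. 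Hence one of the three pairwise intersections is contained in $K$, and $K$ is strongly 2-irreducible.

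The only mild subtlety — and the one place worth being careful — is the selection of a minimal element among the three indices: the family $\{K_i\}$ is a chain under inclusion, so the three submodules $K_{i(1,2)},K_{i(1,3)},K_{i(2,3)}$ are totally ordered by inclusion and one of them is contained in the other two; choosing that one (not literally a minimum of $I$, but the smallest of these three submodules) is all that is needed, and no well-ordering or Zorn-type argument is required since we are only comparing three members. Everything else is a direct unwinding of the definitions, so I would present the argument essentially as above without further elaboration.
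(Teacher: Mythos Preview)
Your proof is correct and follows essentially the same approach as the paper's: assume for contradiction that no pairwise intersection lies in $K$, extract three witnessing indices, use the chain condition to pass to the smallest of the three $K_i$'s, and contradict the strongly 2-irreducible property of that single member. Your write-up is in fact slightly more careful than the paper's, since you explicitly verify that $K$ is proper (the paper omits this), and your discussion of why only three elements of the chain need to be compared is a helpful clarification.
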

\begin{proof}
Let $H_1\cap H_2 \cap H_3\subseteq \cap_{i \in I}K_i$ for submodules $H_1$, $H_2$ and $H_3$ of $M$.
Assume that $H_1+H_2\not\subseteq \cap_{i \in I}K_i$, $H_1+H_3\not\subseteq \cap_{i \in I}K_i$, and $H_2+H_3\not\subseteq \cap_{i \in I}K_i$. Then there are $m,n,t \in I$, where
$ H_1\cap H_2 \not \subseteq K_m$, $H_1\cap H_3 \not\subseteq K_n$, and $H_2\cap H_3 \not\subseteq K_t$.  Since  $\{K_i\}_{i \in I}$ is a chain we can assume that $K_m \subseteq K_n\subseteq K_t$. But as  $H_1\cap H_2 \cap H_3\subseteq K_m$ and $K_m$ is a strongly sum 2-irreducible submodule of $M$, we have either $H_1\cap H_2 \subseteq K_m$ or  $H_1\cap H_3\subseteq K_m$ or $H_2 \cap H_3\subseteq K_m$.
In any case, we get a contradiction.
\end{proof}

\begin{thm}\label{t11.12}
Let $f : M \rightarrow \acute{M}$ be a epimorphism of R-modules. Then we have the following.
\begin{itemize}
\item [(a)] If $N$ is a strongly 2-irreducible submodule of $M$ such that $ker(f) \subseteq N$, then $f(N)$ is a  strongly 2-irreducible submodule  of $f(M)$.
\item [(b)] If $\acute{N}$ is a strongly 2-irreducible submodule of $\acute{M}$, then $f^{-1}(\acute{N})$ is a strongly 2-irreducible submodule of $M$.
 \end{itemize}
\end{thm}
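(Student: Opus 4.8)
The plan is to treat the two parts via the two complementary correspondences between submodules of $M$ and of $\acute{M}$ induced by $f$: for (a) pull the three submodules of $\acute{M}$ back along $f$, and for (b) push the three submodules of $M$ forward. I will freely use that, since $f$ is surjective, $f(f^{-1}(\acute{H}))=\acute{H}$ and $f^{-1}(\acute{H}_1)\cap f^{-1}(\acute{H}_2)\cap f^{-1}(\acute{H}_3)=f^{-1}(\acute{H}_1\cap \acute{H}_2\cap \acute{H}_3)$, and that for submodules $A,B$ of $M$ with $\ker f\subseteq A$ one has $f(A\cap B)=f(A)\cap f(B)$; equivalently $f$ and $f^{-1}$ are inverse lattice isomorphisms between the submodules of $M$ containing $\ker f$ and the submodules of $\acute{M}$.

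For (a): assume $\acute{H}_1\cap \acute{H}_2\cap \acute{H}_3\subseteq f(N)$ for submodules $\acute{H}_i$ of $\acute{M}=f(M)$, and set $H_i=f^{-1}(\acute{H}_i)$, so each $H_i\supseteq\ker f$. Since $\ker f\subseteq N$ we have $f^{-1}(f(N))=N$, hence pulling back the hypothesis gives $H_1\cap H_2\cap H_3=f^{-1}(\acute{H}_1\cap \acute{H}_2\cap \acute{H}_3)\subseteq N$. As $N$ is strongly $2$-irreducible, after relabelling $H_1\cap H_2\subseteq N$; applying $f$ (legitimate past the intersection because $\ker f\subseteq H_1$) yields $\acute{H}_1\cap \acute{H}_2=f(H_1)\cap f(H_2)=f(H_1\cap H_2)\subseteq f(N)$. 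Finally $f(N)$ is proper in $\acute{M}$, since $f(N)=\acute{M}$ would force $N=f^{-1}(f(N))=M$; so $f(N)$ is strongly $2$-irreducible.

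For (b): assume $H_1\cap H_2\cap H_3\subseteq f^{-1}(\acute{N})$ for submodules $H_i$ of $M$. The intended argument is to transport this to $\acute{M}$, deduce from strong $2$-irreducibility of $\acute{N}$ that (say) $f(H_1)\cap f(H_2)\subseteq \acute{N}$, and pull back: $H_1\cap H_2\subseteq f^{-1}(f(H_1)\cap f(H_2))\subseteq f^{-1}(\acute{N})$. Properness is immediate, since $f^{-1}(\acute{N})=M$ would give $\acute{N}\supseteq f(M)=\acute{M}$.

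The step I expect to be the real obstacle is the very first one in (b): from $H_1\cap H_2\cap H_3\subseteq f^{-1}(\acute{N})$ one gets only $f(H_1\cap H_2\cap H_3)\subseteq \acute{N}$, while applying strong $2$-irreducibility of $\acute{N}$ requires $f(H_1)\cap f(H_2)\cap f(H_3)\subseteq \acute{N}$ — and in general $f(H_1\cap H_2\cap H_3)$ is strictly smaller than $f(H_1)\cap f(H_2)\cap f(H_3)$. The natural fix, replacing each $H_i$ by $H_i+\ker f$ (which has the same image and now contains $\ker f$, so $f$ commutes with the intersection), only works if $(H_1+\ker f)\cap(H_2+\ker f)\cap(H_3+\ker f)\subseteq f^{-1}(\acute{N})$, i.e. if this triple intersection equals $(H_1\cap H_2\cap H_3)+\ker f$ — a distributivity-type identity of exactly the kind appearing in Proposition \ref{p2.4}(e), which is not available in an arbitrary module. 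So (b) seems to need either such an identity built in as a hypothesis on $f$ (e.g. $M$ distributive, or $\ker f$ a waist submodule) or a genuinely different argument; the obstruction is absent in (a) precisely because the submodules $f^{-1}(\acute{H}_i)$ there contain $\ker f$ by construction.
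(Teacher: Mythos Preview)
Your treatment of (a) is correct and coincides with the paper's argument: pull the three submodules back along $f$, apply strong $2$-irreducibility of $N$, then push forward using surjectivity and $\ker f\subseteq N$.

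Your suspicion about (b) is entirely justified, and the paper's own proof commits exactly the step you flag: from $H_1\cap H_2\cap H_3\subseteq f^{-1}(\acute{N})$ it passes directly to $f(H_1)\cap f(H_2)\cap f(H_3)\subseteq \acute{N}$, which is unwarranted since $f$ need not commute with intersections of submodules not containing $\ker f$. Moreover, the statement of (b) is actually false as written. Over a field $k$, take $M=k^3$, $\acute{M}=k$, $f$ the projection onto the first coordinate, and $\acute{N}=0$ (strongly irreducible, hence strongly $2$-irreducible, in $k$). Then $N:=f^{-1}(0)=\{0\}\times k^2$ is not strongly $2$-irreducible in $k^3$: with
\[
H_1=k(1,0,0)+k(0,1,0),\quad H_2=k(1,0,0)+k(0,0,1),\quad H_3=k(1,1,0)+k(1,0,1),
\]
one has $H_1\cap H_2=k(1,0,0)$, $H_1\cap H_3=k(1,1,0)$, $H_2\cap H_3=k(1,0,1)$, none contained in $N$, while $H_1\cap H_2\cap H_3=0\subseteq N$. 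So the obstruction you isolated is not an artefact of the method but a genuine failure of the claim; an extra hypothesis of the sort you propose (distributivity of $M$, or $\ker f$ a waist submodule) is what is needed to rescue (b).
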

\begin{proof}
(a) Let $N$ be a strongly 2-irreducible submodule of $M$. If $f(N) =\acute{M}$, then we have
$N+Ker(f)=f^{-1}(f(N))=f^{-1}(\acute{M})=f^{-1}(f(M))=M$. Now as $ker(f) \subseteq N$, we get that $N=M$, which is a contradiction. Therefore, $f(N) \not=\acute{M}$. Suppose that $\acute{H_1}\cap \acute{H_2}\cap \acute{H_3}\subseteq f(N)$ for submodules $\acute{H_1}$, $\acute{H_2}$ and $\acute{H_3}$ of $f(M)$. Then
$ f^{-1}(\acute{H_1}) \cap f^{-1}(\acute{H_2}) \cap f^{-1}(\acute{H_3})\subseteq f^{-1}(f(N))=N$ since $ker(f) \subseteq N$. Thus by assumption, either $ f^{-1}(\acute{H_1}) \cap f^{-1}(\acute{H_2})\subseteq N$ or $ f^{-1}(\acute{H_1}) \cap f^{-1}(\acute{H_3})\subseteq N$ or $f^{-1}(\acute{H_2}) \cap f^{-1}(\acute{H_3})\subseteq N$. Now as $f$ is epimorphism,  we have either $\acute{H_1}\cap \acute{H_2}\subseteq f(N)$ or $\acute{H_1}\cap \acute{H_3}\subseteq f(N)$ or $\acute{H_2}\cap \acute{H_3}\subseteq f(N)$, as needed.

(b) Let $\acute{N}$ be a strongly 2-irreducible submodule of $\acute{M}$.
Since $\acute{N} \not =\acute{M}$ and $f$ is a epimorphism, we have $f^{-1}(\acute{N}) \not =M$.
Now let $H_1\cap H_2 \cap H_3\subseteq f^{-1}(\acute{N})$ for submodules $H_1$, $H_2$ and $H_3$ of $M$. Then
$f(H_1)\cap f(H_2) \cap f(H_3)\subseteq f(f^{-1}(\acute{N}))= \acute{N}$. Thus by assumption, either $f(H_1)\cap f(H_2)\subseteq \acute{N}$ or $f(H_1)\cap f(H_3)\subseteq \acute{N}$ or $f(H_2) \cap f(H_3)\subseteq \acute{N}$. Now we have either $H_1\cap H_2\subseteq f^{-1}(\acute{N})$ or $H_1\cap H_3\subseteq f^{-1}(\acute{N})$ or $H_2 \cap H_3\subseteq f^{-1}(\acute{N})$, as required.
\end{proof}

\begin{thm}\label{t222.1}
Let $M$ be a finitely generated multiplication distributive $R$-module and let $N$ be a non-zero proper submodule of $M$. Then the following statements are equivalent:
\begin{itemize}
\item [(a)] $N$ is a strongly 2-irreducible submodule of $M$;
\item [(b)] $(N:_RM)$ is a strongly 2-irreducible ideal of $R$;
\item [(c)] $(N:_RM)$ is a 2-irreducible ideal of $R$.
\end{itemize}
\end{thm}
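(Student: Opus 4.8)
The plan is to tie all three statements to the single submodule $N$ and its residual $(N:_RM)$, pushing the only nontrivial content through the equivalence $(a)\Leftrightarrow(b)$ that is already available and invoking distributivity exactly once. First I would obtain $(a)\Leftrightarrow(b)$ directly from Theorem \ref{p1.2}: since $M$ is a finitely generated multiplication $R$-module, that theorem applies verbatim to $N$, and no distributivity hypothesis is needed for this step. I would also record here that $N$ proper forces $(N:_RM)$ proper — otherwise $(N:_RM)=R$ would give $M=RM\subseteq N$ — so the relevant definitions genuinely apply on both sides.

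Next, $(b)\Rightarrow(c)$ is the ideal-theoretic shadow of Proposition \ref{p2.4}(a): if $(N:_RM)=J_1\cap J_2\cap J_3$, then in particular $J_1\cap J_2\cap J_3\subseteq(N:_RM)$, so strong $2$-irreducibility gives, say, $J_1\cap J_2\subseteq(N:_RM)$; the reverse inclusion $(N:_RM)=J_1\cap J_2\cap J_3\subseteq J_1\cap J_2$ is automatic, whence $(N:_RM)=J_1\cap J_2$. Thus every strongly $2$-irreducible ideal is $2$-irreducible, which is $(b)\Rightarrow(c)$.

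The crux is $(c)\Rightarrow(a)$ (equivalently $(c)\Rightarrow(b)$). Here I would use that $M$ is distributive, so by Proposition \ref{p22.4} it suffices to show that $N$ is a $2$-irreducible submodule of $M$. Suppose $N=H_1\cap H_2\cap H_3$. Applying the residual functor and using the elementary identity $(H_i\cap H_j\cap H_k:_RM)=(H_i:_RM)\cap(H_j:_RM)\cap(H_k:_RM)$ yields $(N:_RM)=(H_1:_RM)\cap(H_2:_RM)\cap(H_3:_RM)$. Since $(N:_RM)$ is a $2$-irreducible ideal, $(N:_RM)=(H_i:_RM)\cap(H_j:_RM)=(H_i\cap H_j:_RM)$ for some $i\neq j$. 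Multiplying by $M$ and using that $M$ is a multiplication module, so $K=(K:_RM)M$ for every submodule $K$, we get $N=(N:_RM)M=(H_i\cap H_j:_RM)M=H_i\cap H_j$. Hence $N$ is $2$-irreducible, and Proposition \ref{p22.4} upgrades this to strongly $2$-irreducible, giving $(c)\Rightarrow(a)$ and closing the loop $(a)\Rightarrow(b)\Rightarrow(c)\Rightarrow(a)$.

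I do not anticipate a real obstacle: the only load-bearing facts beyond the already-established Theorem \ref{p1.2} and Proposition \ref{p22.4} are that $(-:_RM)$ carries finite intersections of submodules to intersections of ideals and that $K=(K:_RM)M$ holds for an arbitrary submodule of a multiplication module — both are routine. The one point to be careful about is making sure the argument is routed through $N$ and $(N:_RM)$ specifically (rather than attempting a general statement that the whole ideal lattice of $R$ is distributive, which need not be true here), since distributivity is only assumed for $M$.
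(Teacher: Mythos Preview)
Your proof is correct and close in spirit to the paper's, but the $(c)\Rightarrow(a)$ step is organized differently. The paper argues \emph{directly} for strong $2$-irreducibility: given $H_1\cap H_2\cap H_3\subseteq N$, it uses distributivity to write $N=N+(H_1\cap H_2\cap H_3)=(N+H_1)\cap(N+H_2)\cap(N+H_3)$, passes to $(\,\cdot:_RM)$, applies $2$-irreducibility of the ideal, and then uses the finitely generated multiplication hypothesis (via \cite[Corollary~1.7]{BS89}) to recover $N=N+(H_i\cap H_j)$. You instead front-load the distributivity by invoking Proposition~\ref{p22.4} to reduce to ordinary $2$-irreducibility, and then run the residual/multiplication argument on an \emph{equality} $N=H_1\cap H_2\cap H_3$, which spares you the $N+(-)$ trick entirely. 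Both routes hinge on the same two facts---$(\,\cdot:_RM)$ preserves finite intersections and $K=(K:_RM)M$ in a multiplication module---so neither is deeper; your version is a bit cleaner because the distributivity is encapsulated in one citation rather than an explicit lattice computation.
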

\begin{proof}
$(a)\Rightarrow (b)$
This follows from Theorem \ref{p1.2}.

$(b)\Rightarrow (c)$
This follows from \cite[Proposition 1]{DM16}.

$(c)\Rightarrow (a)$
Let $H_1\cap H_2 \cap H_3\subseteq N$ for submodules $H_1$, $H_2$ and $H_3$ of $M$. Then as $M$ is a distributive $R$-module, we have
$$
N=N +(H_1\cap H_2\cap H_3)=(N+ H_1)\cap (N\cap H_2)\cap (N\cap H_3).
$$
This implies that
$(N:_RM)=(N+ H_1:_RM) \cap (N+ H_2:_RM) \cap (N+ H_3:_RM)$. Thus by assumption, either$(N:_RM)=(N+ H_1:_RM) \cap (N+ H_2:_RM)$ or $(N:_RM)=(N+ H_1:_RM)\cap (N+ H_3:_RM)$ or $(N:_RM)=(N+ H_2:_RM) \cap (N+ H_3:_RM)$. Therefore, by \cite[Corollary 1.7]{BS89}, either $N=N+(H_1\cap H_2)$ or $N=N+(H_1\cap H_3)$ or $N=N+(H_2\cap H_3)$, since $M$ is a finitely generated multiplication $R$-module. Thus either, $H_1\cap H_2\subseteq N$ or  $H_1\cap H_3\subseteq N$  or  $H_2 \cap H_3\subseteq N$  as needed.
\end{proof}

Let $R_i$ be a commutative ring with identity and $M_i$ be an $R_i$-module, for $i = 1, 2$. Let $R = R_1 \times R_2$. Then $M = M_1 \times M_2$ is an $R$-module and each submodule of $M$ is in the form of $N = N_1 \times N_2$ for some submodules $N_1$ of $M_1$ and $N_2$ of $M_2$.

\begin{thm}\label{t11.15}
Let $R = R_1 \times R_2$ be a decomposable ring and $M = M_1 \times M_2$
be an $R$-module, where $M_1$ is an $R_1$-module and $M_2$ is an $R_2$-module. Suppose that $N = N_1 \times N_2$ is a proper submodule of $M$. Then the following conditions are equivalent:
\begin{itemize}
  \item [(a)] $N$ is a strongly 2-irreducible submodule of $M$;
  \item [(b)] Either $N_1 = M_1$ and $N_2$ strongly 2-irreducible submodule of $M_2$ or $N_2 = M_2$ and $N_1$ is a strongly 2-irreducible submodule of $M_1$ or $N_1$, $N_2$ are strongly irreducible submodules of $M_1$, $M_2$, respectively.
\end{itemize}
\end{thm}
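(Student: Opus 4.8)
The plan is to reduce everything to the coordinatewise description of submodules of $M=M_1\times M_2$: every submodule of $M$ has the form $A\times B$ with $A\le M_1$, $B\le M_2$; one has $(A\times B)\cap(A'\times B')=(A\cap A')\times(B\cap B')$; and $A\times B\subseteq A'\times B'$ iff $A\subseteq A'$ and $B\subseteq B'$. I would record this dictionary explicitly at the start, since every step relies on it. I would also note that because $N=N_1\times N_2$ is proper, $N_1\neq M_1$ or $N_2\neq M_2$.

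For $(b)\Rightarrow(a)$ I would treat the three listed cases. Suppose $N_1=M_1$ and $N_2$ is strongly $2$-irreducible in $M_2$ (so $N_2\neq M_2$ and $N\neq M$). Given submodules $H_i=A_i\times B_i$ ($i=1,2,3$) with $H_1\cap H_2\cap H_3\subseteq N$, the first coordinate inclusion is automatic and the second gives $B_1\cap B_2\cap B_3\subseteq N_2$; strong $2$-irreducibility of $N_2$ yields, say, $B_1\cap B_2\subseteq N_2$, whence $H_1\cap H_2=(A_1\cap A_2)\times(B_1\cap B_2)\subseteq M_1\times N_2=N$. The case $N_2=M_2$ is symmetric. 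If instead $N_1$, $N_2$ are strongly irreducible, hence proper (so $N_1\neq M_1$ and $N_2\neq M_2$), a short coordinatewise check shows that $N_1\times M_2$ and $M_1\times N_2$ are strongly irreducible proper submodules of $M$ (from $(A\cap A')\times(B\cap B')\subseteq N_1\times M_2$ one gets $A\cap A'\subseteq N_1$, hence $A\subseteq N_1$ or $A'\subseteq N_1$); since $(N_1\times M_2)\cap(M_1\times N_2)=N_1\times N_2=N$, Lemma \ref{p1.4} finishes this case.

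For $(a)\Rightarrow(b)$, assume $N$ is strongly $2$-irreducible. If $N_1=M_1$ (so $N_2\neq M_2$), I would show $N_2$ is strongly $2$-irreducible in $M_2$: given $B_1\cap B_2\cap B_3\subseteq N_2$ in $M_2$, apply strong $2$-irreducibility of $N$ to $H_i:=M_1\times B_i$, whose triple intersection lies in $N$, to get (after relabeling) $M_1\times(B_1\cap B_2)\subseteq M_1\times N_2$, i.e. $B_1\cap B_2\subseteq N_2$. The case $N_2=M_2$ is symmetric. In the remaining case $N_1\neq M_1$ and $N_2\neq M_2$ I must show both $N_1$ and $N_2$ are strongly irreducible; for $N_1$, given $A_1\cap A_2\subseteq N_1$ in $M_1$, apply strong $2$-irreducibility of $N$ to the triple $A_1\times M_2$, $A_2\times M_2$, $M_1\times N_2$, with triple intersection $(A_1\cap A_2)\times N_2\subseteq N$. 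The conclusion $(A_1\times M_2)\cap(A_2\times M_2)=(A_1\cap A_2)\times M_2\subseteq N$ cannot hold, since it would force $M_2\subseteq N_2$ against $N_2\neq M_2$; so one of the other two conclusions holds, giving $A_1\subseteq N_1$ or $A_2\subseteq N_1$. Symmetrically $N_2$ is strongly irreducible in $M_2$, which is the third option of $(b)$.

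The computations are routine once the product dictionary is in place; the only delicate point, in $(a)\Rightarrow(b)$, is to keep track of which of the three conclusions of strong $2$-irreducibility can actually occur for the triples I feed in, and to eliminate the degenerate one using properness of the relevant coordinate ($N_2\neq M_2$, resp. $N_1\neq M_1$). I would also stay alert to the fact that ``strongly irreducible'' and ``strongly $2$-irreducible'' are defined only for proper submodules, so I need the submodule in question to be proper each time I invoke one of these notions — which is precisely where the properness of $N$, together with the case split, is used.
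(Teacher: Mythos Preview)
Your proof is correct and follows essentially the same approach as the paper: the same case split on whether $N_1=M_1$, $N_2=M_2$, or both are proper, the same use of Lemma~\ref{p1.4} for the third case of $(b)\Rightarrow(a)$, and the same trick of feeding a well-chosen triple into the strong $2$-irreducibility of $N$ to extract strong irreducibility of each coordinate. The only cosmetic differences are that the paper passes to the quotient $M/(0\times M_2)$ rather than arguing directly coordinatewise when $N_2=M_2$, and in the proper-proper case it uses the triple $(H_1\times M_2),\,(M_1\times 0),\,(K_1\times M_2)$ where you use $(A_1\times M_2),\,(A_2\times M_2),\,(M_1\times N_2)$; both choices work for the same reason.
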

\begin{proof}
$(a) \Rightarrow (b)$.
Let $N = N_1 \times N_2$ be a strongly 2-irreducible submodule of $M$ such that $N_2 = M_2$. From our hypothesis, $N$ is proper, so $N_1 \not =M_1$. Set $\acute{M}=M/(0\times M_2)$. One can see that $\acute{N}=N /(0\times M_2)$ is a strongly 2-irreducible submodule of $\acute{M}$. Also, observe that $\acute{M} \cong M_1$ and $\acute{N} \cong N_1$. Thus $N_1$ is a strongly 2-irreducible submodule of $M_1$. By a similar argument as in the previous case, $N_2$ is a strongly 2-irreducible submodule of $M_2$, where, $N_1 =M_1$. Now suppose that $N_1\not =M_1$ and $N_2 \not =M_2$. We show that $N_1$ is a irreducible submodule of $M_1$. Suppose that $H_1\cap K_1\subseteq N_1$ for some submodules $H_1$ and $K_1$ of $M_1$. Then
$$
(H_1\times M_2) \cap (M_1\times 0) \cap (K_1\times M_2)  \subseteq (H_1\cap K_1)\times 0 \subseteq N_1\times N_2.
$$
Thus by assumption, either $(H_1\times M_2) \cap (M_1\times 0) \subseteq N_1\times N_2$
or $(H_1\times M_2)\cap (K_1\times M_2)  \subseteq N_1\times N_2$ or $(M_1\times 0) \cap (K_1\times M_2)  \subseteq N_1\times N_2$. Therefore, $H_1 \subseteq N_1$ or $K_1 \subseteq N_1$ since $N_2 \not =M_2$. Thus $N_1$ is a strongly irreducible submodule of $M_1$. Similarly, we can show that $N_2$ is strongly irreducible submodule of $M_2$.

$(b) \Rightarrow (a)$.
Suppose that $N = N_1 \times M_2$, where $N_1$ is a strongly 2-irreducible submodule of $M_1$. Then it is clear that $N$ is a strongly 2-irreducible submodule of $M$. Now, assume that $N = N_1 \times N_2$, where $N_1$ and $N_2$ are strongly irreducible  submodules of $M_1$ and $M_2$, respectively. Hence $(N_1 \times M_2)\cap (M_1 \times N_2) = N_1 \times N_2 = N$ is a strongly 2-irreducible submodule of $M$, by Lemma \ref{p1.4}.
\end{proof}

\begin{thm}\label{t11.16}
Let $R = R_1\times R_2\times \cdots \times R_n$ ($2\leq n < \infty$) be a decomposable ring and $M =
M_1 \times M_2 \cdots \times M_n$ be an $R$-module, where for every $1\leq i \leq n$, $M_i$ is an $R_i$-module, respectively. Then for a proper submodule $N$ of $M$ the following conditions are equivalent:
\begin{itemize}
  \item [(a)] $N$ is a strongly 2-irreducible submodule of $M$;
  \item [(b)]  Either $N =\times^n_{i=1}N_i$ such that for some $k \in \{1, 2, ..., n\}$, $N_k$ is a strongly 2-irreducible submodule of $M_k$, and $N_i =M_i$ for every $i\in \{1, 2, ..., n\}\setminus \{k\}$ or $N =\times^n_{i=1}N_i$ such that for some $k,m \in \{1, 2, ..., n\}$, $N_k$ is a strongly irreducible submodule of $M_k$, $N_m$ is a strongly irreducible submodule of $M_m$, and $N_i =M_i$ for every $i \in \{1, 2, ..., n\}\setminus \{k,m\}$.
\end{itemize}
\end{thm}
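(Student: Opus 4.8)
The plan is to argue by induction on $n$, taking Theorem \ref{t11.15} as the base case $n=2$ and reducing the inductive step to a two-factor splitting. For $n\geq 3$ write $R=R_1\times R'$ with $R'=R_2\times\cdots\times R_n$ and $M=M_1\times M'$ with $M'=M_2\times\cdots\times M_n$; every submodule of $M$ is then of the form $N=N_1\times N'$ with $N'=N_2\times\cdots\times N_n$ a submodule of the $R'$-module $M'$, which has $n-1\geq 2$ factors, so the induction hypothesis (or the base case, when $n-1=2$) applies to $N'$.

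Before running the induction I would record the analogous, and easier, decomposition for strongly irreducible submodules: a submodule $N_1\times\cdots\times N_n$ of $M_1\times\cdots\times M_n$ is strongly irreducible if and only if there is exactly one index $k$ for which $N_k$ is a (proper) strongly irreducible submodule of $M_k$ while $N_i=M_i$ for all $i\neq k$. Call this fact $(\star)$. The two-factor version is immediate: if both $N_1\neq M_1$ and $N_2\neq M_2$, then $(M_1\times N_2)\cap(N_1\times M_2)=N$ witnesses the failure of strong irreducibility, while if, say, $N_2=M_2$, then testing $N_1\times M_2$ against arbitrary products of submodules reduces strong irreducibility of $N$ to that of $N_1$; the general statement then follows by grouping off one factor and inducting on $n$.

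For the inductive step, suppose $N=N_1\times N'$ is a proper strongly 2-irreducible submodule of $M=M_1\times M'$. By Theorem \ref{t11.15} applied to this two-factor splitting, one of three situations occurs: (i) $N_1=M_1$ and $N'$ is strongly 2-irreducible in $M'$; (ii) $N'=M'$ and $N_1$ is strongly 2-irreducible in $M_1$; (iii) $N_1$ and $N'$ are strongly irreducible in $M_1$ and $M'$ respectively. In case (ii) we land in the first alternative of (b) with $k=1$. In case (i), the induction hypothesis splits $N'$ into "one strongly 2-irreducible factor, the rest full" or "two strongly irreducible factors, the rest full"; adjoining the full factor $N_1=M_1$ yields exactly one of the two alternatives of (b). In case (iii), $(\star)$ turns "$N'$ strongly irreducible in $M'$" into "$N_m$ strongly irreducible for a unique $m\in\{2,\dots,n\}$, the rest full", so together with $N_1$ strongly irreducible we obtain the second alternative of (b) for the pair $\{1,m\}$. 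Conversely, given $N$ in one of the forms of (b): if $N$ has a single strongly 2-irreducible factor $N_k$ with every other factor full, group $M=M_k\times M''$ and quote the easy direction of Theorem \ref{t11.15}; if $N$ has two strongly irreducible factors $N_k,N_m$ ($k\neq m$) with the rest full, write $N=A\cap B$ where $A$ replaces $N_k$ by $M_k$ and $B$ replaces $N_m$ by $M_m$, observe via $(\star)$ that $A$ and $B$ are strongly irreducible submodules of $M$, and apply Lemma \ref{p1.4} to conclude that $N$ is strongly 2-irreducible.

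The only substantive work, and the step I expect to be the main obstacle, is proving $(\star)$ cleanly — in particular the isomorphism/quotient argument showing that a single proper strongly irreducible factor with the remaining factors full produces a strongly irreducible submodule of the product — together with the bookkeeping needed to verify that the three cases emerging from Theorem \ref{t11.15}, once the induction hypothesis and $(\star)$ are unwound, reassemble into precisely the two alternatives in (b), with no extra cases, nothing omitted, and with properness preserved at each grouping step.
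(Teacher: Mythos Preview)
Your proposal is correct and follows essentially the same strategy as the paper: induction on $n$ with Theorem \ref{t11.15} as the base case, a two-factor regrouping for the inductive step, and an appeal to the product characterization of strongly irreducible submodules (your $(\star)$, which the paper simply asserts without proof). The only cosmetic difference is that you split off the first factor $M_1$ while the paper splits off the last factor $M_n$, and you spell out the converse direction and $(\star)$ in more detail than the paper does.
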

\begin{proof}
We use induction on $n$. For $n = 2$ the result holds by Theorem \ref{t11.15}. Now
let $3\leq n < \infty$ and suppose that the result is valid when $K = M_1\times \cdots \times M_{n-1}$. We show that the result holds when $M = K \times M_n$. By Theorem \ref{t11.15}, $N$ is a strongly 2-irreducible submodule of $M$ if and only if either $N = L \times M_n$ for some strongly 2-irreducible submodule $L$ of $K$ or $N =K\times L_n$ for some strongly 2-irreducible submodule $L_n$ of $M_n$ or $N = L \times L_n$ for some strongly irreducible submodule $L$ of $K$ and some strongly irreducible submodule $L_n$ of $M_n$. Note that a proper submodule $L$ of $K$ is a strongly irreducible submodule of $K$ if and only
if $L = \times^{n-1}_{i=1}N_i$ such that for some $k \in \{1, 2, ..., n - 1\}$, $N_k$ is a strongly irreducible submodule of $M_k$, and $N_i =M_i$ for every $i\in \{1, 2, ..., n - 1\}\setminus \{k\}$. Consequently the claim is now verified.
\end{proof}

\bibliographystyle{amsplain}

\end{document}